\newtheorem{lemma}[equation]{Lemma}
\newtheorem{theorem}[equation]{Theorem}
\newtheorem{conjecture}[equation]{Conjecture}
\newtheorem{cor}[equation]{Corollary}
\newtheorem{prop}[equation]{Proposition}
\newtheorem{question}[equation]{Question}
\theoremstyle{remark}
\newtheorem{example}[equation]{Example}
\newtheorem{counter-example}[equation]{Counter-example}
\newtheorem{remark}[equation]{Remark}
\newtheorem*{acknowledgments}{Acknowledgments}
\numberwithin{equation}{section}
\numberwithin{figure}{section}
\newcommand\C{{\mathbb C}}
\renewcommand\P{{\mathbb P}}
\newcommand\R{{\mathbb R}}
\newcommand\Z{{\mathbb Z}}
\newcommand\Q{{\mathbb Q}}
\newcommand\N{{\mathbb N}}
\newcommand\Gal{\operatorname{Gal}}
\newcommand\Aut{\operatorname{Aut}}
\newcommand\Qbar{\overline{\mathbb{Q}}}
\newcommand\Zbar{\overline{\mathbb{Z}}}
\newcommand{\cFbar}{\overline{\mathcal{F}}}
\DeclareMathOperator{\hhat}{{\widehat{h}}}
\newcommand\pol  {\mathrm{pol}}
\newcommand{\cB}{\mathcal{B}}
\newcommand{\cD}{\mathcal{D}}
\newcommand{\cF}{\mathcal{F}}
\newcommand{\cO}{\mathcal{O}}
\newcommand{\cM}{\mathcal{M}}
\newcommand{\cS}{\mathcal{S}}
\newcommand{\cR}{\mathcal{R}}
\newcommand{\cP}{\mathcal{P}}
\newcommand{\fp}{\mathfrak{p}}
\begin{document}

\title[Bounded height in families of dynamical systems]{Bounded height in families of dynamical systems}

\author{L.~De~Marco}
\address{
Laura DeMarco\\
Department of Mathematics\\
Northwestern University\\
2033 Sheridan Road\\
Evanston, IL 60208 \\
USA
}
\email{demarco@northwestern.edu}

\author{D.~Ghioca}
\address{
Dragos Ghioca\\
Department of Mathematics\\
University of British Columbia\\
Vancouver, BC V6T 1Z2\\
Canada
}
\email{dghioca@math.ubc.ca}

\author{H. Krieger}
\address{
Holly Krieger\\
Department of Pure Mathematics and Mathematical Statistics\\
 University of Cambridge \\
Cambridge CB3 0WB\\
UK
}
\email{hkrieger@dpmms.cam.ac.uk}

\author{K.~D.~Nguyen}
\address{
Khoa Dang Nguyen\\
Department of Mathematics\\
University of British Columbia\\
Vancouver, BC V6T 1Z2\\
Canada}
\email{dknguyen@math.ubc.ca}

\author{T.~J.~Tucker}
\address{
Thomas Tucker\\
Department of Mathematics\\ 
University of Rochester\\
Rochester, NY 14627\\
USA
}
\email{thomas.tucker@rochester.edu}

\author{H. Ye}
\address{
Hexi Ye\\
Department of Mathematics\\
Zhejiang University\\
Hangzhou, 310027\\
China}
\email{yehexi@gmail.com}

\date{\today}

\begin{abstract}
Let $a,b\in\Qbar$ be such that exactly one of $a$ and $b$ is an algebraic integer, and let $f_t(z):=z^2+t$ be a family of polynomials parametrized by $t\in\Qbar$. We prove that the set of all $t\in\Qbar$ for which there exist positive integers $m$ and $n$ such that $f_t^m(a)=f_t^n(b)$ has bounded height. This is a special case of a more general result supporting a new bounded height conjecture in dynamics. Our results fit into the general setting of the principle of unlikely intersections in arithmetic dynamics.
\end{abstract}

\maketitle

\thispagestyle{empty}


\section{Introduction}
\label{section introduction}
A subset of $\P^1(\Qbar)$ is said to have bounded height if the Weil height is bounded on this set.
To quickly illustrate the types of results and topics treated in this paper, we state the following theorem about the family of quadratic polynomials $\{z^2+t:  t\in\Qbar\}$, the most intensively family in complex and arithmetic dynamics:

\begin{theorem}\label{thm:z^2+t}
Let $f_t(z)=g_t(z)=z^2+t$ and $a,b\in \Qbar$ such that 
exactly one of $a$ and $b$ is an algebraic integer. Then
the set 
$$S:=\{t\in \Qbar:\ \text{$f_t^m(a)=g_t^n(b)$ for some
$m,n\in\N$}\}$$
has bounded height.
\end{theorem}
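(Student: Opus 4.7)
The plan is to apply Tate's canonical height $\hat h_t$ of $f_t$, characterized by $\hat h_t\circ f_t = 2\hat h_t$. Applying it to $f_t^m(a)=g_t^n(b)$ gives, for every $t\in S$,
\[
2^m\,\hat h_t(a) \;=\; 2^n\,\hat h_t(b). \qquad(\ast)
\]
The standard comparison $\hat h_t(x) = \tfrac12 h(t) + O_x(1)$, uniform in $t$, substituted into $(\ast)$ handles the case $m\ne n$ at once: the coefficient $|2^{m-1}-2^{n-1}|$ of $h(t)$ does not vanish, and rearranging forces $h(t) = O_{a,b}(1)$ uniformly in $m,n$. So the problem reduces to the diagonal case $m=n$, where $(\ast)$ becomes $\hat h_t(a)=\hat h_t(b)$; combinatorially, since the integrality hypothesis excludes $a=\pm b$, each such $t$ must be a root of one of the polynomials
\[
Q_k(t) \;:=\; f_t^k(a) + f_t^k(b), \qquad k\ge 1,
\]
and we need a Weil height bound on roots of $Q_k$ that is uniform in $k$.

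To exploit the integrality hypothesis I would decompose the canonical height into local Green functions $\hat h_t(x) = \sum_v c_v\,G_{t,v}(x)$ with $G_{t,v}(x):=\lim_n 2^{-n}\log^+|f_t^n(x)|_v$. Assume without loss of generality that $a\in\Zbar$ and $b\notin\Zbar$. An ultrametric calculation gives $G_{t,v}(a) = \tfrac12\log^+|t|_v$ at every non-arch $v$, and at each of the finitely many non-arch places $v_0$ with $|b|_{v_0}>1$,
\[
G_{t,v_0}(b)-G_{t,v_0}(a) \;=\; \max\!\bigl(\log|b|_{v_0} - \tfrac12\log^+|t|_{v_0},\;0\bigr) \;\ge\; 0,
\]
while all other non-arch places contribute $0$ to $\hat h_t(b)-\hat h_t(a)$. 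Hence $\hat h_t(a)=\hat h_t(b)$ is equivalent to a balance between a non-negative finite non-arch sum and a uniformly bounded archimedean sum.

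The final step is to convert this balance into an upper bound on $h(t)$. The maps $t\mapsto \hat h_t(a)$ and $t\mapsto\hat h_t(b)$ are heights attached to semi-positive adelic metrics on $\mathcal O_{\P^1_t}(1/2)$; their difference defines a semi-positive adelic metric on the trivial line bundle that is \emph{strictly positive} at $v_0$ on the non-empty disc $\{|t|_{v_0}<|b|_{v_0}^2\}$. I would invoke the Zhang / Chambert-Loir / Yuan machinery of adelic metrized line bundles to argue that this place-wise positivity produces a strictly positive essential minimum for the difference, forcing the height-zero locus — which contains $S\cap\{m=n\}$ — to have bounded Weil height.

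The main obstacle is making this last step genuinely yield an upper bound rather than merely a finiteness statement for sets of small points. Place-wise positivity does not by itself rule out $h(t)\to\infty$: archimedean Green function differences, though uniformly bounded, vanish as $|t|_v\to\infty$, so a priori they could absorb the non-arch positivity for $t$ of large height at arch places. Establishing the necessary quantitative archimedean estimate — for instance via the B\"ottcher-type expansion $f_t^n(z) = t^{2^{n-1}}\bigl(1+O_z(|t|_v^{-1})\bigr)$ for large $|t|_v$, combined with a Mahler-measure analysis of the polynomials $Q_k$ — is the technical heart of the argument, and is exactly where the integrality asymmetry between $a$ and $b$ must be converted into a uniform global height bound.
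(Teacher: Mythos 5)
Your reduction steps are sound and in fact parallel the paper: the identity $2^m\hhat_{f_t}(a)=2^n\hhat_{f_t}(b)$ together with a specialization estimate disposes of $m\neq n$ (the paper does this via Theorem~\ref{thm:multiplicative dependence} and Call--Silverman; your sharper uniform estimate $\hhat_{f_t}(x)=\tfrac12 h(t)+O_x(1)$ is correct for this family but is asserted, not proved), and the observation that a minimal diagonal collision forces $t$ to be a root of $Q_k=f_t^k(a)+f_t^k(b)$ is exactly the factorization used in Section~\ref{sec:Eisenstein}/\ref{sec:proof}. The genuine gap is the step you yourself flag: a height bound on the roots of $Q_k$ uniform in $k$. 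Your proposed mechanism cannot deliver it. First, the non-archimedean positivity at $v_0$ is supported only on the disc $|t|_{v_0}<|b|_{v_0}^2$, and the collision parameters you are trying to control necessarily lie outside it: if $f_{t_0}^N(a)=f_{t_0}^N(b)$ with $|a|_{v_0}\le 1<|b|_{v_0}$, then $|t_0|_{v_0}\ge |b|_{v_0}^2$ (otherwise $|f_{t_0}^N(b)|_{v_0}=|b|_{v_0}^{2^N}$ strictly exceeds $|f_{t_0}^N(a)|_{v_0}\le\max\{1,|t_0|_{v_0}\}^{2^{N-1}}$), and there $G_{t_0,v_0}(a)=G_{t_0,v_0}(b)$, so your local difference vanishes identically on the set you care about. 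Second, even granting some positivity, the Zhang/Chambert-Loir/Yuan essential-minimum machinery controls points of \emph{small} height for a semipositive metrized bundle; here both $\hhat_{t}(a)$ and $\hhat_{t}(b)$ are comparable to $\tfrac12 h(t)$, and the vanishing of their difference is not a smallness condition, so no lower bound on an essential minimum converts it into an upper bound on $h(t)$. In short, the technical heart of the theorem is missing, as you acknowledge.

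For comparison, the paper's resolution of this step is not a height or Mahler-measure estimate at the archimedean place but a Galois-theoretic degree bound at the non-archimedean place $p$ where $a$ and $b$ have different integrality: it constructs $p$-adic B\"ottcher coordinates for the family (Section~\ref{sec:Bottcher}, valid even when $p\mid d$), shows that for a minimal collision $f^N_{t_0}(a)=f^N_{t_0}(b)$ the ratio $\tilde{\cB}'(f^{c}_{t_0}(a),t_0)/\tilde{\cB}'(f^{c}_{t_0}(b),t_0)$ is a root of unity whose order is a strict multiple of $2^{N-c-1}$, and deduces $[K(t_0):K]\geq c\,2^N$ from the degree of cyclotomic extensions of $K_p$. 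Combined with the polynomial height bound $h_{\pol}(f^N(a)-f^N(b))\ll 2^N$ (Corollary~\ref{cor:height of polynomials}) and Corollary~\ref{cor:roots}, this yields $h(t_0)=O(1)$. If you want to complete your argument, this degree-counting input (or an equally strong irreducibility statement for the $Q_k$) is what must replace the adelic-positivity step.
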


\noindent
For the conclusion of Theorem \ref{thm:z^2+t} to hold, some conditions on $a$ and $b$ are necessary:  if $a^2=b^2$, then $f_t^n(a)=g_t^n(b)$ for every $t$ and every $n\geq 1$, hence $S=\Qbar$.  However, we expect that the condition $a^2= b^2$ is the only obstruction to $S$ having bounded height.

The main goal of this paper is to formulate a web of bounded height results and questions, in the context of dynamical systems on $\P^1$, inspired by the results of Bombieri, Masser, and Zannier \cite{BMZ99, BMZ07} and more recent work such as in \cite{AMZ}.  Related results showing bounded height for dynamical systems have appeared in \cite{Ngu15,GN16}.  These questions, in turn, belong broadly to the principle of ``unlikely intersections'', as described in \cite{Zannier-book}.  In the dynamical context, see for example \cite{BD11, GKNY}, which specifically address unlikely intersections in the family of Theorem \ref{thm:z^2+t}.

Let $C$ be a smooth projective curve over $\Qbar$.  We fix a logarithmic Weil height $h_C$ on $C(\Qbar)$ associated to a divisor of degree one.  A subset of $C(\Qbar)$ is said to have bounded height if the function $h_C$ is bounded on this set; this is independent of the choice of $h_C$ \cite[Proposition~B.3.5]{HS00}.  We are interested in the following: 

\begin{question}\label{question:general}
Let $C$ be a projective curve defined over $\Qbar$ and $\cF=\Qbar(C)$.  Fix $r\geq 2$, and $f_1(z),\ldots,f_r(z)\in \cF(z)$ of degrees $d_1, \ldots, d_r \geq 2$.  Given points $c_1,\ldots,c_r\in \P^1(\cF)$, let $f_i^n(c_i)$ denote their iterates under $f_i$ in $\P^1(\cF)$.  Let $V$ be a hypersurface in $(\P^1)^r$, defined over $\cF$.  When can we conclude that the set
\begin{eqnarray*} 
 \{\; t\in C(\Qbar) & : &\mbox{ there exist } n_1,\ldots, n_r \geq 0 \mbox{ such that the specialization }\\  
 	&& ( f_1^{n_1}(c_{1}),\ldots,f_r^{n_r}(c_r))_t \mbox{ lies in }  V_t(\Qbar) \;\}
\end{eqnarray*}
has bounded height? 
\end{question}

\begin{example} \label{power maps}
The case when each $f_i$ is a power map $z\mapsto z^{\pm d_i}$ is treated by \cite[Theorem~1.2]{AMZ}.  Indeed, let $c_i \in \cF^*$ be multiplicatively independent modulo constants; this means that for every $(k_1, \ldots, k_r) \in\Z^r\setminus\{0\}$, we have $c_1^{k_1}\cdots c_r^{k_r} \notin \Qbar$.  Let $V$ be any hypersurface in $(\P^1)^r$.  Amoroso, Masser, and Zannier proved that the set 
	$$\{t\in\Qbar: (c_1^{k_1}, \ldots, c_r^{k_r})_t \in V_t(\Qbar) \mbox{ for some } k_1,\ldots, k_r \in\Z\}$$ 
has bounded height unless the $c_i$ satisfy a special geometric structure; namely, there exists a tuple $(k_1,\ldots,k_r)\in\Z^r\setminus\{0\}$ so that $(c_1^{k_1}, \ldots, c_r^{k_r})_t \in V_t$ for \emph{every} $t\in \Qbar$.  Applying their result to powers of the form $k_i = \pm d_i^{n_i}$ gives boundedness of height for the set of Question \ref{question:general} for the dynamical systems $f_i(z) = z^{d_i}$.
\end{example} 

\begin{example} \label{Lattes examples}
Suppose that $E$ is a non-isotrivial elliptic curve over $\cF$ and $\phi_1, \phi_2$ are endomorphisms of $E$ of degrees $>1$.  Let $f_1(z), f_2(z) \in \cF(z)$ be the associated Latt\`es maps on $\P^1$; that is, $f_i$ is the quotient of $\phi_i$ via the projection $\pi: E \to \P^1$ that identifies a point on $E$ with its additive inverse.  Fix points $P_1, P_2 \in E(\cF)$ that are linearly independent on $E$, and let $V$ be the diagonal in $\P^1\times\P^1$.  We then let $c_i = \pi(P_i)$ for $i = 1,2$.  The set in Question \ref{question:general} consists of points $t\in C(\Qbar)$ for which the specializations $P_{i,t}$ satisfy a relation 
	$$\phi_{1,t}^{n_1}(P_{1,t}) = \pm \phi_{2,t}^{n_2}(P_{2,t})$$
on $E_t$ for some $n_1, n_2 \geq 0$.  This set has bounded height, as a consequence of Silverman's specialization theorem \cite{Silverman:specialization}.  Indeed, the specializations $P_{i,t}$ satisfy a linear relation on $E_t$ if and only if $\det (\langle P_{i,t}, P_{j,t}\rangle) = 0$, where $\langle \cdot, \cdot\rangle$ is the canonical height pairing; we then apply \cite[III Corollary 11.3.1]{Sil94}.
\end{example}


In this paper, we focus on Question \ref{question:general} when $r=2$ and $V$ is the diagonal of $\P^1\times\P^1$.  We first exclude the cases where the maps are associated to an underlying group structure; this will take us out of the contexts that are traditionally addressed in the literature, such as those of Examples \ref{power maps} and \ref{Lattes examples}, but it also allows us to describe more easily the conditions that should guarantee bounded height.  

We will say that a function $f(z)\in \cF(z)$ of degree $d\geq 2$ is \emph{special} if it is conjugate by a M\"obius transformation in $\cFbar(z)$ to $z^{\pm d}$, $\pm T_d(z)$ or a Latt\`es map.  The Chebyshev polynomial $T_d$ is the unique polynomial in $\Q[z]$ satisfying $T_d(z+1/z)=z^d+1/z^d$, so it is a quotient of the power map $z^d$.  A rational function $f(z)\in \cFbar(z)$ of degree $d\geq 2$ is a Latt\`es map if there exist an elliptic curve $E$ over $\cFbar$ together with finite morphisms $\pi: E\rightarrow \P^1_{\cFbar}$ and $\phi: E\rightarrow E$ such that $\pi\circ \phi=f\circ \pi$.  

\begin{conjecture}\label{conj:diagonal}
Fix $f(z),g(z)\in \cF(z)$ with degrees at least $2$ and $a,b\in \P^1(\cF)$. 
Assume that at least one of $f$ and $g$ is not special.  Set
$$S:=\{t\in C(\Qbar):\ \text{$f_t^m(a(t))=g_t^n(b(t))$ for some $m,n \geq 0$}\}.$$ 
Then at least one of the following statements must hold:
\begin{itemize}
	\item [(1)] Either $(f,a)$ or $(g,b)$ is isotrivial.
	\item [(2)] There exist $m,n \geq 0$ such that
	$f^m(a)=g^n(b)$.
	\item [(3)] $S$ has bounded height.
\end{itemize}
\end{conjecture}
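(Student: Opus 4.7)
The plan is to combine the theory of variation of dynamical canonical heights in a one-parameter family with an arithmetic rigidity argument, reducing the bounded height statement to a local-global comparison of adelic metrized line bundles on $C$. First, view $f,g \in \cF(z)$ as rational maps over the function field $\cF = \Qbar(C)$, and let $\hat{h}_f(a), \hat{h}_g(b) \geq 0$ denote the resulting canonical heights. Under the assumption that neither (1) nor (2) holds, a function-field Northcott-type result should yield $\hat{h}_f(a) > 0$ and $\hat{h}_g(b) > 0$; this is where non-isotriviality of $(f,a)$ and $(g,b)$, together with the non-occurrence of a generic collision, enter.

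Next, I would invoke Call--Silverman specialization and its dynamical refinements to write, for each $t \in C(\Qbar)$,
\begin{equation*}
\hat{h}_{f_t}(a(t)) = \hat{h}_f(a)\,h_C(t) + O(1), \qquad \hat{h}_{g_t}(b(t)) = \hat{h}_g(b)\,h_C(t) + O(1),
\end{equation*}
and to realize both left-hand sides as Weil heights on $C$ coming from nef adelic metrized line bundles $\overline{L}_{f,a}$ and $\overline{L}_{g,b}$ of positive degrees. For $t\in S$ with $f_t^m(a(t)) = g_t^n(b(t)) = P$, the identities $\hat{h}_{f_t}(P) = d_f^m\,\hat{h}_{f_t}(a(t))$ and $\hat{h}_{g_t}(P) = d_g^n\,\hat{h}_{g_t}(b(t))$, combined with the fact that $\hat{h}_{f_t}$ and $\hat{h}_{g_t}$ differ from the standard Weil height on $\P^1$ by an amount controlled in $h_C(t)$, couple $h_C(t)$, $m$, $n$, and the two generic heights by a nontrivial relation.

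The decisive step would then be a Yuan--Zhang style equidistribution / Hodge-index inequality asserting that if $\overline{L}_{f,a}$ and $\overline{L}_{g,b}$ are not proportional as adelic line bundles on $C$, then $S$ has bounded height. The hard part will be the converse analysis: one must show that proportionality of the two adelic bundles --- equivalently, agreement of the canonical measures attached to $(f,a)$ and $(g,b)$ at every place of $\cF$ --- forces one of the excluded geometric possibilities, namely isotriviality of $(f,a)$ or $(g,b)$, a generic collision $f^m(a)=g^n(b)$, or specialness of both $f$ and $g$. Such a rigidity statement is available for the family $z^2+t$ via the archimedean analysis of \cite{BD11} and \cite{GKNY}, and extending it to arbitrary non-special $f,g$ on $\P^1$ appears to be the essential gap: the conjecture sits at the present limits of unlikely-intersection technology in arithmetic dynamics, and I would expect partial results (e.g.\ one of $f,g$ a polynomial, or specific families such as in Theorem \ref{thm:z^2+t}) to come from direct place-by-place estimates as a first step toward the general case.
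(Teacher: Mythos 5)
The statement you are addressing is labeled \emph{Conjecture}~\ref{conj:diagonal} in the paper and is not proved there; the authors state it as an open problem and establish only special cases (Theorem~\ref{thm:z^2+t}, Theorem~\ref{thm:multiplicative dependence}, Theorem~\ref{thm:general}). Your write-up is honest about this, since you conclude by acknowledging that the decisive rigidity step ``appears to be the essential gap.'' So you should not be expected to have a complete proof; there is none to compare against.

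That said, it is worth contrasting your proposed strategy with what the paper actually does in the cases it can handle. You and the paper agree on the first move: use Call--Silverman specialization to relate $\hhat_{f_t}(a(t))$, $\hhat_{g_t}(b(t))$, and $h_C(t)$, and to exploit non-isotriviality via $\hhat_f(a)>0$, $\hhat_g(b)>0$; this is exactly the content of Theorem~\ref{thm:multiplicative dependence}. After that the approaches diverge. You propose to package the variation of canonical heights into nef adelic metrized line bundles on $C$ and to invoke a Yuan--Zhang style arithmetic Hodge-index / equidistribution inequality, reducing bounded height to a rigidity statement that non-proportionality of the two adelic bundles implies bounded height. The paper instead works in an entirely explicit, place-by-place way: for $t_0\in S_{\cM}$ it exhibits a polynomial $P(t)\in K[t]$ vanishing at $t_0$, bounds $h_{\pol}(P)$ via the estimates of Section~\ref{sec:upper bound}, and then proves that $t_0$ has degree over $K$ comparable to $\deg P$ --- in easy cases by Eisenstein's criterion (Section~\ref{sec:Eisenstein}) and in the main case by constructing $p$-adic B\"ottcher coordinates near infinity (Section~\ref{sec:Bottcher}) to show that a certain root of unity of high order lies in $K_p(t_0)$, forcing $[K(t_0):K]$ to be large. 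This is essentially the strategy of Amoroso--Masser--Zannier \cite{AMZ}, with irreducibility obtained directly rather than through auxiliary polynomials. Your adelic-line-bundle strategy is a legitimate alternative that has succeeded elsewhere in dynamical unlikely intersections (\cite{BD11}, \cite{GKNY}), and in principle it targets the full conjecture rather than families where $p$-adic B\"ottcher coordinates behave well; the trade-off is that the needed rigidity theorem for arbitrary non-special $f$ and $g$ is, as you say, currently out of reach, whereas the paper's explicit route actually closes the argument in the settings of Theorems~\ref{thm:z^2+t} and~\ref{thm:general}.

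One technical caveat worth flagging: the error term in your specialization formula is stated too strongly. Proposition~\ref{prop:height function fields}(c), which is what the paper uses, gives only $\hhat_{f_t}(a(t)) = \hhat_f(a)\,h_C(t) + o(h_C(t))$ as $h_C(t)\to\infty$, not an $O(1)$ bound. The $O(1)$ refinement (a genuine ``variation of canonical heights'' statement and, relatedly, the existence of the nef adelic bundles $\overline{L}_{f,a}$ you want) is a substantially stronger assertion that is only known in special cases; assuming it in full generality would quietly build in a hypothesis that is itself of the same order of difficulty as the conjecture.
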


A pair $(f,c)$, with $f(z) \in \cF(z)$ and $c \in \P^1(\cF)$, is said to be \emph{isotrivial} if there exists a fractional linear transformation $\mu\in \cFbar(z)$ such that $\mu\circ f\circ \mu^{-1}\in \Qbar(z)$ and $\mu(c)\in \P^1(\Qbar)$.  Condition (2) is clearly an obstruction to $S$ having bounded height.  Condition (1) can also lead to unbounded height.  To see this, assume that $f(z)\in \Qbar(z)$ and $a\in \P^1(\Qbar)$ is such that $a$ is not preperiodic for $f$.  Then the sequence $\{f^m(a)\}_{m\geq 0}$ has unbounded height in $\P^1(\Qbar)$, by the Northcott property of the Weil height.  Fixing $n$, the solutions to the equations
	$$f^m(a)=g_t^n(b(t))$$
as $m$ goes to infinity will also have unbounded height.

\begin{remark}
One possible application of Conjecture~\ref{conj:diagonal} is to the
theory of iterated monodromy groups.  Given a field $L$ and a rational
function $\varphi(z) \in L(z)$ of degree at least 2, one obtains Galois
extensions $L_n$ of $L$ by considering the splitting field of
$\varphi^n(z) - u$ over $L(u)$ where $u$ is a transcendental;
equivalently, each field $L_n$ may be viewed as the Galois closure of
the extension of function fields induced by the map
$\varphi^n: \P^1_L \longrightarrow \P^1_L$ (see \cite{Jon13} and
\cite{Odo85} for surveys).  Passing
to the inverse limit of the Galois groups $\Gal(L_n/L)$ as $n$ goes to
infinity one obtains a group $G_\varphi$ and a natural map from
$G_\varphi$ to $\Aut(T_d)$, where $T_d$ is the infinite rooted $d$-ary
tree corresponding to inverse images of $u$ under iterates of
$\varphi$.  Pink \cite{Pin13} has shown that if $\varphi$ is quadratic
over a field of characteristic 0, then the map from $G_\varphi$ to
$\Aut(T_2)$ is surjective unless $\varphi$ is post-critically finite
or there is an $n$ such that $\varphi^n(a) = \varphi^n(b)$ for $a$,
$b$ the critical points of $\varphi$.  Hence Conjecture~\ref{conj:diagonal}, with $f = g$ a rational function of degree 2 and $a, b$ the critical points of $f$, implies that if the map from $G_f$ to $\Aut(T_2)$ is surjective, then for all $t \in C(\Qbar)$ outside of a set of bounded height,
the map from $G_{f_t}$ to $\Aut(T_2)$ is also surjective.  More
generally, combining the methods of \cite{JKMT16} with
Conjecture~\ref{conj:diagonal}, one might hope that within any
non-special one-parameter family $f$ of rational functions over $\Qbar$ with
degree $d \geq 2$, the image of the $G_{f_t}$ in $\Aut(T_d)$ is the
same for all $t$ outside a set of bounded height.
\end{remark}

Our next result allows us to produce many examples that satisfy Conjecture~\ref{conj:diagonal}.
Let $\hhat_f$ and $\hhat_g$ denote the canonical height functions on $\P^1(\cFbar)$ associated to dynamical systems $f$ and $g$ (see \cite{CS93}). Write $d_1=\deg(f)$ and $d_2=\deg(g)$; we say that $d_1$ and $d_2$
are multiplicative dependent if they have a common power. Define
$$\cM=\{(m,n)\in\N^2: \;  d_1^m\hhat_f(a)=d_2^n\hhat_g(b)\}$$
and 
$$S_{\cM}=\{t\in C(\Qbar): \; f_t^m(a(t))=g_t^n(b(t)) \mbox{ for some } (m,n)\in\cM \}.$$
Obviously, if $\cM$ is empty then $S_{\cM}$ is empty.  We have the following:

\begin{theorem}\label{thm:multiplicative dependence}
Let $C$, $\cF$, $f(z)$, $g(z)$, $a$, $b$, $S$, $\cM$, and $S_{\cM}$ be as above, and assume that $d_1=\deg(f)\geq 2$ and $d_2=\deg(g)\geq 2$ are multiplicatively dependent. 
(We allow the possibility that both $f$ and $g$ are special.) 
If the set $S_{\cM}$ has bounded height, then one of the following holds:
\begin{itemize}
	\item [(1)] Either $(f,a)$ or $(g,b)$ is isotrivial. 
	\item [(2)] There exist $m,n \geq 0$ such that
	$f^m(a)=g^n(b)$.
	\item [(3)] $S$ has bounded height. 
\end{itemize} 
In particular, if $\cM$ is empty, then Conjecture \ref{conj:diagonal} holds for the pairs $(f,a)$ and $(g,b)$.
\end{theorem}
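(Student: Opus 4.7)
The plan is to show that, assuming hypotheses (1) and (2) fail, the set $S\setminus S_\cM$ has bounded height; combined with the hypothesis that $S_\cM$ has bounded height, this yields (3). For $(m,n)\in\N^2$ set
$$
Z_{m,n}:=\{t\in C(\Qbar):f_t^m(a(t))=g_t^n(b(t))\}.
$$
The failure of (2) means $f^m(a)\neq g^n(b)$ in $\P^1(\cF)$ for every $m,n$, so each $Z_{m,n}$ is cut out by a nonzero rational function on $C$ and is therefore finite; the task reduces to obtaining a uniform upper bound on $h_C(t)$ for $t\in\bigcup_{(m,n)\notin\cM}Z_{m,n}$.

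The analytic input is the Call--Silverman specialization theorem \cite{CS93,Sil94}, furnishing the uniform asymptotics
$$
\hhat_{f_t}(a(t))=\hhat_f(a)\,h_C(t)+O(1),\qquad \hhat_{g_t}(b(t))=\hhat_g(b)\,h_C(t)+O(1),
$$
together with the Weil--canonical comparison $|\hhat_{\varphi_t}(P)-h(P)|\leq A\,h_C(t)+B$ for $\varphi\in\{f,g\}$, uniform in $t\in C(\Qbar)$ and $P\in\P^1(\Qbar)$, which follows from the Weil height machine and the bound $O(h_C(t))$ on the heights of the coefficients of $\varphi_t$. For $t\in Z_{m,n}$ and $P:=f_t^m(a(t))=g_t^n(b(t))$, the relations $\hhat_{f_t}(P)=d_1^m\hhat_{f_t}(a(t))$ and $\hhat_{g_t}(P)=d_2^n\hhat_{g_t}(b(t))$, together with the comparison, yield
$$
\bigl|\bigl(d_1^m\hhat_f(a)-d_2^n\hhat_g(b)\bigr)h_C(t)\bigr|\;\leq\; 2A\,h_C(t)+2B+C_0(d_1^m+d_2^n).
$$

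The key step is a gap estimate extracted from multiplicative dependence. Writing $d_1=d^p$ and $d_2=d^q$ for an integer $d\geq 2$, the ratios $d_1^m/d_2^n$ lie in the discrete set $d^\Z$; for $(m,n)\notin\cM$ this ratio differs from $\hhat_g(b)/\hhat_f(a)$ by a multiplicative factor of at least $d$ or at most $1/d$, which (when $\hhat_f(a),\hhat_g(b)>0$) produces
$$
\bigl|d_1^m\hhat_f(a)-d_2^n\hhat_g(b)\bigr|\geq c\,(d_1^m+d_2^n)
$$
for a constant $c>0$ depending only on $\hhat_f(a),\hhat_g(b),d$. Combining with the previous inequality, $(d_1^m+d_2^n)\bigl(c\,h_C(t)-C_0\bigr)\leq 2A\,h_C(t)+2B$, so once $h_C(t)$ exceeds an explicit threshold the sum $d_1^m+d_2^n$ is bounded by a universal constant. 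Hence only finitely many $(m,n)$ can arise, each $Z_{m,n}$ is finite, and $S\setminus S_\cM$ has bounded height. The ``in particular'' clause follows since $\cM=\emptyset$ forces $S_\cM=\emptyset$, which trivially has bounded height.

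The main obstacle is the degenerate case where one of $\hhat_f(a),\hhat_g(b)$ vanishes, in which the gap estimate above breaks down. If $\hhat_f(a)=0$, then $a$ is preperiodic for $f$ in the generic fiber, so the orbit $\{f^m(a)\}$ consists of finitely many sections $c_1,\ldots,c_k\in\P^1(\cF)$; the set $S$ is then the union of the finitely many sets $\{t:c_i(t)=g_t^n(b(t))\text{ for some }n\}$. Since $h(c_i(t))=O(h_C(t))$ while $\hhat_{g_t}(g_t^n(b(t)))=d_2^n\hhat_g(b)\,h_C(t)+O(d_2^n)$, the equation forces $n$ bounded whenever $\hhat_g(b)>0$, reducing again to a finite union of finite fibers. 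The symmetric case is identical, while the doubly degenerate case $\hhat_f(a)=\hhat_g(b)=0$ makes $\cM=\N^2$ and $S_\cM=S$ directly. A secondary technical point is verifying the linear-in-$h_C(t)$ bound in the Weil--canonical comparison, which follows by tracking the dependence of the local height defect on the coefficients of $\varphi_t$.
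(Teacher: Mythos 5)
Your proposal is correct and takes essentially the same route as the paper: the degenerate cases where $\hhat_f(a)$ or $\hhat_g(b)$ vanishes are handled separately (the paper isolates the preperiodic case in a preliminary proposition, using Proposition~\ref{prop:height function fields}(d) to identify a vanishing canonical height with preperiodicity under the non-isotriviality assumption), and in the main case one combines the Call--Silverman specialization asymptotics with the discreteness of $\{\delta^s : s\in\Z\}$ to bound the exponents $(m,n)$ once $h_C(t)$ is large, so that only finitely many of the finite fibers $Z_{m,n}$ can contribute. The paper phrases the gap estimate by dividing the height inequality by $\max\{d_1^m,d_2^n\}$ rather than by a direct lower bound proportional to $d_1^m+d_2^n$, but this is a cosmetic difference, and the underlying mechanism is identical.
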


\begin{example}\label{eg:cM is empty}
 $C=\P^1$, $\cF=\Qbar(t)$, $f(z)=z^4+t$, $g(z)=z^8+t$, $a=t+2017$, and $b=t^3+2018$.  Then
 $(f,a)$ and $(g,b)$ are not isotrivial. 
 For every $m,n\geq 0$, $f^m(a)$ is a polynomial of degree $4^m$, and $g^n(b)$
 is a polynomial
 of degree $3\times 8^n$.
Therefore $f^n(a)\neq g^m(b)$ for every $m,n\geq 0$.
Moreover
$\cM=\emptyset$ since $\displaystyle\frac{\hhat_g(b)}{\hhat_f(a)}=3$ is not a power of $2$.  By Theorem~\ref{thm:multiplicative dependence}, the set
$S$ has bounded height.
\end{example}

\begin{counter-example}
Assume $d_1$ and $d_2$ are multiplicatively independent. Consider $C=\P^1$, $\cF=\Qbar(t)$, $f(z)=z^{d_1}$, $g(z)=z^{d_2}$, $a=t$, $b=2t$. It is not hard to show that $(f,a)$ and $(g,b)$ are not isotrivial, and we have $f^m(a)\neq g^n(b)$ for every $m,n \geq 0$.  Moreover, we have $\hhat_f(a) = \hhat_g(b) = 1$ and so, $\cM$ is empty (since $d_1$ and $d_2$ are multiplicatively independent).  The set $S$ consists of elements of the form $2^{d_2^n/(d_1^m-d_2^n)}$ for $m,n\in\N$.  From our assumption on $d_1$ and $d_2$, the numbers
$\displaystyle\frac{d_1^m}{d_2^n}-1$ as $m,n\in\N$ can be arbitrarily close to $1$.  Hence $\vert d_2^n/(d_1^m-d_2^n)\vert$ can be arbitrarily large, and $S$ does not have bounded height.  This is a case ruled out by the hypothesis of \cite{AMZ} in Example \ref{power maps} above.  This also illustrates the exclusion of special maps from Conjecture \ref{conj:diagonal}.
\end{counter-example}

The proof of Theorem~\ref{thm:multiplicative dependence} is
given in Section~\ref{sec:useful reductions}. A key ingredient in
the proof is a well-known result by Call-Silverman \cite{CS93}. 
It seems much harder to prove Conjecture~\ref{conj:diagonal} when
$\cM\neq \emptyset$ as in Theorem~\ref{thm:z^2+t} (or the more
general Theorem~\ref{thm:general}). 
 
 A natural approach to proving
Theorems~\ref{thm:z^2+t} and \ref{thm:general} (or other cases of Conjecture \ref{conj:diagonal}) as well as the main result of
Amoroso-Masser-Zannier consists of two steps:
\begin{itemize}
\item [(i)] Let $K$ be a number field such that
$a,b\in K$.
For each $t\in S$, we construct a polynomial $P(x)\in K[x]$, depending on $f, g, a,b$, that vanishes at $t$ and whose degree is easy to compute.  We then obtain an upper bound on the height of the polynomial $P$. 
\item [(ii)] The second step is to prove that $t$ has a large degree over $K$, comparable to the degree of $P$.  This means that a certain factor of $P$ with large degree is irreducible over $K$.
\end{itemize}
While the first step is somewhat tedious, it only involves relatively straightforward height inequalities. However the second step is a notoriously hard problem in diophantine geometry.  Amoroso, Masser, and Zannier get around the second step by the construction of certain auxiliary polynomials using Siegel's lemma, the use of
Wronskians for certain zero estimates, and various careful height estimates. In our setting, we directly carry out the second step in this paper.  For certain examples, we can use basic tools (Eisenstein's criterion) to deduce irreducibility.  But for more interesting examples, such as the setting of Theorem \ref{thm:z^2+t}, we use the construction of $p$-adic B\"ottcher coordinates for families of polynomials.  This helps us relate $t\in S$ to a root of unity which automatically yields a very strong lower bound on the degree of $t$.  Our treatment of $p$-adic B\"ottcher coordinates extends earlier work of Ingram \cite{Ing13} in two important aspects.  First, it treats families of polynomials, hence is more flexible for applications to dynamics over parameter spaces.  Second, it allows the possibility that $p$ divides the degree of the polynomials in families. 
 
The organization of this paper is as follows. In the next section, we provide background on heights over number fields, function fields, and heights of polynomials following \cite{BG06,HS00}. This includes a well-known specialization theorem of Call-Silverman \cite{CS93} that plays an important role in the proof of Theorem~\ref{thm:multiplicative dependence}, which we give in Section~\ref{sec:useful reductions}.  In Section \ref{sec:upper bound} we provide upper bounds for heights of polynomials of the form $f^n(a)\in \Qbar[t]$ where $f(z)\in \Qbar[t][z]$.  Such upper bounds motivate a general approach to Theorem~\ref{thm:z^2+t} mentioned above and we give immediate examples based on Eisenstein's criterion in Section \ref{sec:Eisenstein}. In Section~\ref{sec:Bottcher}, we introduce non-archimedean B\"ottcher coordinates for families of polynomials, and we apply these in Section \ref{sec:proof} to prove Theorem~\ref{thm:general} which implies Theorem~\ref{thm:z^2+t}.

\begin{acknowledgments}
We thank the American Institute of Mathematics for its hospitality and support during a SQuaRE meeting in May 2016 which led to the present work. We are grateful also to Francesco Amoroso, David Masser and Umberto Zannier for their comments on a previous version of our paper.
\end{acknowledgments}



\bigskip
\section{Heights}
\label{sec:heights}

In this section we give background on heights.


\subsection{Heights over number fields}\label{subsec:heights Qbar}
Let $K$ be a number field and let
$M_K$ be the set of places of $K$. For each $\fp\in M_K$, let
$p\in M_{\Q}$ denote the restriction of $\fp$
to $\Q$, let $K_{\fp}$ be the completion of $K$ with respect to
$\fp$, and let $n_{\fp}=[K_{\fp}:\Q_p]$.
We define $\vert\cdot\vert_{\fp}$ to be the absolute value
on $K_{\fp}$ extending the standard absolute value
$\vert\cdot\vert_p$ on $\Q_p$ (see \cite[pp.~171]{HS00}). 
Let $\Vert \cdot\Vert_{\fp}=\vert \cdot\vert_\fp^{n_{\fp}}$
so that the product formula 
$\displaystyle\prod_{\fp\in M_K}\Vert x\Vert_{\fp}=1$ holds for every
$x\in K^*$. Let $r\in \N$, for $P=[x_0:\ldots:x_r]\in \P^r(K)$
define 
$H_K(P)=\displaystyle\prod_{\fp\in M_K} 
\max_{0\leq i\leq r}\Vert x_i\Vert_{\fp}$. 
For $P\in \P^r(\Qbar)$, pick a number field $L$ such that $P\in \P^r(L)$, then define
$H(P)=H_L(P)^{1/[L:\Q]}$. This is independent of the choice of $L$. Define $h(P)=\log(H(P))$. 
Finally, we have the height functions $H$ and $h$ on $\Qbar$ by
embedding $\Qbar\rightarrow \P^1(\Qbar)$. 
Let $\phi(z)\in \Qbar(z)$ with degree $d\geq 2$, define
$\hhat_{\phi}$ on $\P^1(\Qbar)$ by 
the formula:
$$\hhat_\phi(x)=\lim_{n\to \infty}\frac{h(\phi^n(x))}{d^n}.$$
The following will be used repeatedly:
\begin{lemma}\label{lem:basic hhat}
\item [(a)] There is a constant $c_0$ depending only on
$\phi$ such that $\vert\hhat_\phi(x)-h(x)\vert\leq c_0$
for every $x\in \P^1(\Qbar)$.
\item [(b)] $\hhat_\phi(\phi(x))=d\hhat_\phi(x)$ for every
$x\in\P^1(\Qbar)$.
\end{lemma}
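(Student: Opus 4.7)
The plan is to follow the standard Call--Silverman telescoping argument, which is the canonical way to produce the height $\hhat_\phi$ and establish both claimed properties simultaneously.

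First, I would establish the functional equation $h(\phi(x)) = d\,h(x) + O(1)$, valid for all $x \in \P^1(\Qbar)$, with an $O(1)$ depending only on $\phi$. This is a standard consequence of writing $\phi = [F_0:F_1]$ for homogeneous forms $F_0, F_1$ of degree $d$ with no common zero: the upper bound $h(\phi(x)) \leq d\,h(x) + c_1$ comes from the triangle inequality applied place by place to $F_i$, while the lower bound $h(\phi(x)) \geq d\,h(x) - c_2$ uses a Nullstellensatz-type identity expressing powers of the coordinates of $x$ as polynomial combinations of $F_0(x), F_1(x)$ (the coefficients of these combinations contribute to $c_2$). I would take $c := \max(c_1,c_2)$ so that $|h(\phi(x)) - d\,h(x)| \leq c$ for all $x$.

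Next, for part (a), I would show convergence of the defining sequence by telescoping. Apply the inequality above with $\phi^n(x)$ in place of $x$ and divide by $d^{n+1}$ to get
\[
\left| \frac{h(\phi^{n+1}(x))}{d^{n+1}} - \frac{h(\phi^n(x))}{d^n} \right| \leq \frac{c}{d^{n+1}}.
\]
Summing over $n \geq 0$ gives a Cauchy sequence, so the limit $\hhat_\phi(x)$ exists, and
\[
\left| \hhat_\phi(x) - h(x) \right| \leq \sum_{n=0}^\infty \frac{c}{d^{n+1}} = \frac{c}{d-1}.
\]
Taking $c_0 := c/(d-1)$, which depends only on $\phi$, completes part (a).

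For part (b), I would simply reindex the defining limit:
\[
\hhat_\phi(\phi(x)) = \lim_{n\to\infty} \frac{h(\phi^{n+1}(x))}{d^n} = d \lim_{n\to\infty} \frac{h(\phi^{n+1}(x))}{d^{n+1}} = d\,\hhat_\phi(x).
\]
The main (only) obstacle is the lower bound $h(\phi(x)) \geq d\,h(x) - c_2$ in the initial functional equation, which requires a Nullstellensatz/resultant argument to produce the required polynomial identities; everything else is formal. Since this inequality is entirely standard (see, e.g., \cite[Theorem B.2.5]{HS00}), I would simply invoke it rather than reproving it.
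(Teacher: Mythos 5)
Your argument is correct and is precisely the standard Call--Silverman telescoping proof that the paper itself invokes by citing \cite[Chapter~3]{Sil07} (the paper gives no independent argument, just that reference). The only difference is that you write out the telescoping and the functional-equation bound explicitly rather than quoting the reference, so there is nothing to add.
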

\begin{proof}
See \cite[Chapter~3]{Sil07}.
\end{proof}

\subsection{Heights of polynomials over $\Qbar$}\label{subsec:heights polynomials}
Let $K$ be a number field. Let $P$ be a nonzero polynomial in $K[X_1,\ldots,X_n]$ written as
$$P=\sum_{(i_1,\ldots,i_n)} a_{i_1,\ldots,i_n}X_1^{i_1}\ldots X_m^{i_n}.$$
For every $\fp\in M_K$, define
$\vert P\vert_{\fp}=\max_{(i_1,\ldots,i_n)} \vert a_{i_1,\ldots,i_m}\vert_{\fp}$
and 
$\displaystyle\Vert P\Vert_{\fp}=\max_{(i_1,\ldots,i_n)} \Vert a_{i_1,\ldots,i_m}\Vert_{\fp}=\vert P\vert_{\fp}^{n_{\fp}}$. 
We will also use $\displaystyle\ell_{1,\fp}(P):=\sum_{(i_1,\ldots,i_n)} \vert a_{i_1,\ldots,i_n}\vert_\fp$.
Then we define 
$H_{\pol,K}(P)=\prod_{\fp\in M_K}\Vert P\Vert_{\fp}$
and $h_{\pol,K} (P)=\log(H_{\pol}(P))$  
As before, for every $P\in \Qbar[X_1,\ldots,X_n]\setminus \{0\}$,
choose a number field $L$ such that
$P\in L[X_1,\ldots,X_n]$
then 
define $H_{\pol}(P)=H_{\pol,L}(P)^{1/[L:\Q]}$
and $h_{\pol}(P)=\log (H_{\pol}(P))$ which is
the height of the point whose projective
coordinates are the coefficients of $P$. 
Write $M_K=M_K^0\cup M_K^{\infty}$ where $M_K^0$ (respectively $M_K^{\infty}$) is the set of finite (respectively infinite) places.
We have the following:
\begin{lemma}\label{lem:Gauss-Gelfond}
Let $K$ be a number field. 
\begin{itemize}
\item [(i)] Let $P,Q\in K[X_1,\ldots,X_n]\setminus\{0\}$
and $\fp\in M_K^0$, we have
$\vert PQ\vert_{\fp}=\vert P\vert_{\fp} \vert Q\vert_{\fp}$.
\item [(ii)] Let $v\in M_K^{\infty}$, let $P_1,\ldots,P_m\in K[X_1,\ldots,X_n]$ and
set $P=\prod_{i=1}^m P_i$. We have:
$$2^{-d}\prod_{i=1}^m \vert P_i\vert_v
\leq \vert P\vert_v\leq 2^d\prod_{i=1}^m \vert P_i\vert_v$$
where $d=\deg(P)$ is the total degree of $P$.
\item [(iii)] With the notation as in part (ii), we have:
$$-d\log 2+\sum_{i=1}^m h_{\pol}(P_i)
\leq h_{\pol}(P)\leq d\log 2+\sum_{i=1}^m h_{\pol}(P_i).$$
\end{itemize}
\end{lemma}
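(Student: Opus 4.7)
The three parts are essentially standard: Gauss's lemma at finite places, the Mahler--Gelfond inequalities at infinite places, and a routine combination via the product formula. I will carry them out in sequence.

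For part (i), the inequality $|PQ|_\fp \leq |P|_\fp |Q|_\fp$ is immediate from the ultrametric triangle inequality applied to the convolution formula for the coefficients of $PQ$. For the reverse, my plan is to scale $P$ and $Q$ by elements of $K^*$ so that $|P|_\fp = |Q|_\fp = 1$, reduce the coefficients modulo the maximal ideal of the valuation ring $\cO_\fp$, and observe that the reductions $\overline{P}, \overline{Q}$ lie in the polynomial ring $\kappa(\fp)[X_1,\ldots,X_n]$ over the residue field, which is an integral domain; hence $\overline{PQ} = \overline{P}\,\overline{Q} \neq 0$, so some coefficient of $PQ$ is a unit in $\cO_\fp$, and $|PQ|_\fp \geq 1$.

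For part (ii), I will invoke the archimedean Mahler--Gelfond inequalities. In one variable these follow from exact multiplicativity of the Mahler measure $M(\cdot)$ together with the two-sided comparison between $M(P)$ and $|P|_v$, where one direction has constant $2^d$ (Mahler's coefficient bound coming from symmetric functions of the roots) and the other has a factor polynomial in $d$ (e.g.\ $M(P) \leq \sqrt{d+1}\,|P|_v$); these combine to give the claimed estimates with the stated constants, after modest relaxations. For several variables, my plan is to reduce to the single-variable case via a Kronecker substitution $X_i \mapsto T^{N^{i-1}}$ with $N$ larger than every partial degree appearing, which preserves the sup-norm of coefficients. This is essentially Proposition~1.6.3 of \cite{BG06}.

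Finally, for part (iii), I will take logarithms in (i) and (ii), sum over all places of a number field $L$ containing the coefficients of the $P_i$, and divide by $[L:\Q]$. The finite-place contributions assemble exactly into $\sum_i h_{\pol}(P_i)$ by (i), while the archimedean contributions introduce a total error of at most $\sum_{v\mid\infty} n_v \cdot d\log 2 = [L:\Q] \cdot d\log 2$, which becomes the $\pm d \log 2$ in the conclusion after normalization. The main obstacle is part (ii): getting a clean $2^d$ constant that is stable under multivariable factorization requires care with the Mahler-measure comparison, since iterating single-variable bounds naively picks up lower-order binomial factors. Once these are absorbed, the rest is a routine unpacking of definitions and the identity $\sum_{v\mid\infty} n_v = [L:\Q]$.
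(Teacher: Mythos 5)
The paper's own proof is just a citation: parts (i) and (ii) are Gauss's lemma and Gelfond's lemma from Bombieri--Gubler, and part (iii) is the routine logarithmic combination. Your part (i) argument (normalize, reduce mod $\fp$, use that the residue polynomial ring is a domain) is the standard proof of Gauss's lemma and is fine, and your part (iii) is exactly the paper's.

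Part (ii) has a genuine gap. Kronecker substitution $X_i \mapsto T^{N^{i-1}}$ with $N$ larger than every partial degree does preserve the sup-norm of coefficients, but it badly inflates the degree: a monomial $X_1^{e_1}\cdots X_n^{e_n}$ with $\sum e_i \le d$ is sent to $T^{e_1 + e_2 N + \cdots + e_n N^{n-1}}$, so $\deg_T K(P)$ can be on the order of $d\,N^{n-1}$, and since you must take $N > d$ this is roughly $d^n$. Applying the one-variable Gelfond inequality to $K(P)=\prod K(P_i)$ therefore yields the lower bound $2^{-\deg_T K(P)}\prod |P_i|_v \le |P|_v$, with exponent $\approx d^n$ rather than the claimed $d$. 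That is a much weaker statement and, in particular, would not feed into part (iii) to give the $\pm d\log 2$ error term. The multivariable case genuinely needs a different argument --- e.g.\ the multivariate Mahler measure defined by integration over the $n$-torus, which is still multiplicative and compares to $|P|_v$ with a constant controlled by the \emph{total} degree, or an induction on $n$; this is how Bombieri--Gubler prove Lemma~1.6.11. (Separately, the reference ``Proposition~1.6.3'' is Gauss's lemma in \cite{BG06}, not a Kronecker-substitution statement; and even in one variable your chain $|f|\ge M(f)/\sqrt{d+1}$ and $|f_i|\le 2^{d_i}M(f_i)$ assembles to a constant like $2^{-d}/\sqrt{d+1}$, so the ``modest relaxation'' would actually relax past $2^{-d}$ unless you use a sharper coefficient--Mahler comparison.)
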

\begin{proof}
Part (i) is Gauss's lemma \cite[Lemma~1.6.3]{BG06}
while part (ii) is Gelfond's lemma \cite[Lemma~1.6.11]{BG06}.
Part (iii) follows from (i), (ii), the definition
of $h_{\pol}$, and
the identity $\sum_{v\in M_K^{\infty}} n_v=[K:\Q]$. 
\end{proof}

\begin{cor}\label{cor:roots}
Let $d,d'\in\N$. Let $P(X)\in \Qbar[X]$ be a polynomial
of degree $d$ and let $\alpha\in \Qbar$ be such that
at least $d'$ Galois conjugates of $\alpha$
are roots of $P(t)$. We have:
$$h(\alpha)\leq \frac{d\log 2+h_{\pol}(P)}{d'}.$$
\end{cor}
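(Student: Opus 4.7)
The plan is to isolate a degree-$d'$ piece of $P$ corresponding to the $d'$ Galois conjugates of $\alpha$ and apply the product-height inequality of Lemma~\ref{lem:Gauss-Gelfond}(iii) directly to the full linear factorization, in one shot.

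More precisely, let $\alpha_1,\ldots,\alpha_{d'}$ be the $d'$ distinct Galois conjugates of $\alpha$ that are roots of $P$. Since they are distinct roots in $\Qbar$, we may factor
$$P(X) = R(X) \prod_{i=1}^{d'}(X-\alpha_i)$$
for some $R(X)\in \Qbar[X]$. Applying the lower half of Lemma~\ref{lem:Gauss-Gelfond}(iii) to this factorization of $P$, with $d=\deg(P)$, yields
$$h_{\pol}(P) \;\geq\; -d\log 2 + h_{\pol}(R) + \sum_{i=1}^{d'} h_{\pol}(X-\alpha_i).$$

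Two observations then close the argument. First, $h_{\pol}(X-\alpha_i)$ is the Weil height of the projective point $[1:-\alpha_i]$, which equals $h(\alpha_i)$; by Galois invariance of the Weil height, each of these equals $h(\alpha)$. Second, $h_{\pol}(R)\geq 0$ because the Weil height on projective space is nonnegative. Substituting these gives $h_{\pol}(P)\geq -d\log 2 + d'\cdot h(\alpha)$, which rearranges to the claimed bound.

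There is no substantive obstacle here, but one point is worth flagging: it is important to apply Lemma~\ref{lem:Gauss-Gelfond}(iii) \emph{directly} to the factorization of $P$ into $R$ and the $d'$ linear factors, rather than first bundling the linear factors into a polynomial $Q=\prod(X-\alpha_i)$ and then comparing $h_{\pol}(Q)$ with $d'\,h(\alpha)$ in a separate step. The two-step route would incur an additional $d'\log 2$ loss from a second invocation of the lemma, producing the weaker bound $(h_{\pol}(P)+(d+d')\log 2)/d'$ rather than the stated $(h_{\pol}(P)+d\log 2)/d'$.
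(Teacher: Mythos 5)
Your proof is correct and follows essentially the same route as the paper's: both hinge on Lemma~\ref{lem:Gauss-Gelfond}(iii) applied to a factorization of $P$ containing the linear factors $X-\alpha_i$, together with Galois invariance of $h$ and nonnegativity of the heights of the remaining factors. The only cosmetic difference is that the paper normalizes $P$ to be monic and factors it completely into linear factors, whereas you keep a cofactor $R$ and use $h_{\pol}(R)\geq 0$; the resulting bound is identical.
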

\begin{proof}
We may assume that $P$ is monic and write
$P(t)=\prod_{i=1}^d (t-\alpha_i)$.  We apply Lemma~\ref{lem:Gauss-Gelfond} for $P_i(t)=t-\alpha_i$
and note that there are at least $d'$ Galois conjugates
of $\alpha$ among $\alpha_1,\ldots,\alpha_d$.
\end{proof}

\subsection{Heights over function fields}\label{subsec:heights function fields}
As in Section~\ref{section introduction},  
let $C$ be a smooth projective curve over $\Qbar$,
let
$\cF=\Qbar(C)$, and fix a Weil height
$h_C$ on $C(\Qbar)$ associated to a divisor of degree one. As in Subsection~\ref{subsec:heights Qbar}, we can define
$M_{\cF}$ (with the extra condition that
absolute values are trivial on the field of constants $\Qbar$),
$n_{\fp}:=1$ for every $\fp\in M_{\cF}$, and
the height functions $H_{\cF}$
and $h_{\cF}$ on
$\P^r(\cFbar)$. For $\phi(z)\in \cFbar(z)$ with degree
$d\geq 2$, we can also define
$\hhat_{\phi}$
on $\P^1(\cFbar)$ and Lemma~\ref{lem:basic hhat}
(with the extra condition that $c_1$ depends on
$\cF$ and $\phi$)
remains valid. 

We say that $f(z) \in \cF(z)$ has good reduction over an open subset $U \subset C$ if $f$ induces a morphism $f:  U\times \P^1 \to \P^1$ over $\Qbar$, given by $(t, z) \mapsto f_t(z)$.  In particular, if $f$ has degree $d$, then the specialization will be a well-defined rational map $f_t: \P^1\to \P^1$ of degree $d$.

The following are crucial ingredients in
the next section:
\begin{prop}\label{prop:height function fields}
Let $f(z)\in \cF(z)$ with degree $d\geq 2$. Let $C'$ be a dense open Zariski subvariety of $C$ such that
$f$ has good reduction over $C'$. Let $a\in\P^1(\cF)$. We have:
\begin{itemize}
\item [(a)] There exist positive constants $c_1$ and $c_2$
depending only on $C$ and $f$ such that
$$\vert \hhat_{f_t}(x)-h(x)\vert\leq c_1 h_C(t) + c_2$$
for every $t\in C'(\Qbar)$ and $x\in \P^1(\Qbar)$.

\item [(b)] Regard $a$ as a morphism from $C$ to $\P^1$ and let $\deg(a)$ denote the degree of this morphism. Assume that $h_C$ is a height function corresponding to the divisor
$\frac{1}{\deg(a)}a^*\cO_{\P^1}(1)$. 
There is
a constant $c_3$ depending only on $C$ and $a$ such that
$\vert h(a(t))-\deg(a)h_C(t)\vert \leq c_3$ for every $t\in C(\Qbar)$.

\item [(c)] $$\lim_{h_C(t)\to\infty}\frac{\hhat_{f_t}(a(t))}{h_C(t)}
=\hhat_f(a).$$

\item [(d)] Assume that $a$ is not $f$-preperiodic. We have 
$\hhat_f(a)=0$ if and only if $(f,a)$ is isotrivial.
\end{itemize}
\end{prop}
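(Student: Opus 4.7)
The plan is to establish the four parts in sequence; (a)--(c) are classical specialization-of-canonical-height results essentially due to Call--Silverman \cite{CS93}, while (d) will require a function-field analogue of Northcott's theorem as a black box. For (a), I would start from the telescoping identity
\[
\hhat_{f_t}(x) - h(x) \;=\; \sum_{n\geq 0} d^{-n-1}\bigl(h(f_t^{n+1}(x)) - d\,h(f_t^n(x))\bigr),
\]
and reduce matters to a uniform one-step estimate of the form $|h(f_t(y)) - d\,h(y)| \leq A\,h_C(t) + B$, valid for all $y \in \P^1(\Qbar)$ and all $t \in C'(\Qbar)$. Because $f$ has good reduction on $C'$, it extends to a morphism $C' \times \P^1 \to \P^1$; writing $f$ in homogeneous coordinates whose coefficients are regular functions on $C'$ (whose Weil heights at $t$ grow at most linearly in $h_C(t)$) and invoking the standard functoriality of Weil heights for morphisms yields the desired one-step bound, after which the geometric series telescopes to (a).

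Part (b) is just functoriality of Weil heights: $a\colon C\to\P^1$ is a morphism of degree $\deg(a)$, so $h\circ a$ is a Weil height on $C$ attached to $a^*\mathcal{O}_{\P^1}(1)$; by our normalization $\deg(a)\,h_C$ is a Weil height for the same divisor, so the two differ by $O(1)$ (\cite[Theorem~B.3.2]{HS00}).

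For (c) I would combine (a) and (b). Applying (a) at the iterated point $f_t^n(a(t)) = (f^n(a))(t)$ and using $\hhat_{f_t}(f_t^n(y)) = d^n\hhat_{f_t}(y)$ yields, for each fixed $n\geq 1$,
\[
\left|\hhat_{f_t}(a(t)) \;-\; \frac{h((f^n(a))(t))}{d^n}\right| \;\leq\; \frac{c_1\,h_C(t) + c_2}{d^n}.
\]
The functoriality underlying (b), now applied to the morphism $f^n(a)\colon C\to\P^1$, shows $h((f^n(a))(t))/h_C(t) \to \deg(f^n(a))$ as $h_C(t)\to\infty$. Since for $\xi\in\cF$ the function-field Weil height equals the degree of the associated morphism $C\to\P^1$, the quotient $\deg(f^n(a))/d^n$ converges to $\hhat_f(a)$ as $n\to\infty$ by the very definition of the canonical height on $\cF$. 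Dividing the displayed inequality by $h_C(t)$ and applying a standard $\varepsilon/2$ argument (first choose $n$ large so that $\deg(f^n(a))/d^n$ is within $\varepsilon/2$ of $\hhat_f(a)$ and also $c_1/d^n < \varepsilon/2$, then let $h_C(t)\to\infty$) yields the claim.

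Finally, for (d) one direction is immediate: if $(f,a)$ is isotrivial, then after conjugation by a M\"obius transformation over $\cFbar$ both $f$ and $a$ are defined over the constant field $\Qbar$, so every iterate $f^n(a)$ is a constant in $\cFbar$ of vanishing function-field height, and hence $\hhat_f(a) = 0$. The converse is the main obstacle, and I would invoke as a black box the function-field analogue of Northcott's theorem for dynamical canonical heights due to Baker: on a non-isotrivial pair the only points of canonical height zero are the preperiodic ones, so $\hhat_f(a) = 0$ combined with non-preperiodicity of $a$ forces $(f,a)$ to be isotrivial.
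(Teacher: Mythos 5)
Your proposal is correct, but it takes a more self-contained route than the paper, which disposes of (a), (c), (d) almost entirely by citation: for (a) the paper writes $f=P/Q$, invokes Hsia--Silverman \cite[Proposition~6]{HS11} to get $\vert \hhat_{f_t}(x)-h(x)\vert\leq c_4\tilde h(f_t)+c_5$ with constants depending only on $d$, and then bounds the coefficient height $\tilde h(f_t)$ linearly in $h_C(t)$; for (c) it simply cites Call--Silverman \cite[Theorem~4.1]{CS93}; for (b) and (d) it cites \cite[Theorem~B.3.2]{HS00} and Baker \cite{Bak09}, exactly as you do. Your telescoping argument for (a) and your $\varepsilon$-splitting argument for (c) are in effect re-proofs of those two cited results (indeed your (c) is essentially Call--Silverman's own specialization argument), so what you gain is independence from \cite{HS11} and \cite{CS93}, at the cost of having to supply one quantitative input that you currently pass over quickly: the ``standard functoriality of Weil heights'' gives $h(f_t(y))=d\,h(y)+O_{f_t}(1)$ with an unspecified dependence on the map, whereas your one-step estimate needs the explicit version in which the error is bounded by $c(d)\cdot(\text{height of the coefficients of } f_t)+c'(d)$, uniformly over $t\in C'$ (upper bound by the triangle inequality, lower bound via a resultant/Nullstellensatz identity, using good reduction so the specialized resultant is nonzero and the product formula to remove it). That uniform estimate is precisely the content of the Hsia--Silverman proposition the paper cites, so your argument is sound once this is made explicit. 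Similarly, in (d) Baker's theorem as stated concerns non-isotrivial maps $f$ rather than pairs $(f,a)$; when $f$ itself is isotrivial but the pair is not, you still need the (easy) observation that after conjugating $f$ into $\Qbar(z)$ a non-constant $\mu(a)$ has $\hhat_f(a)=\deg(\mu(a))>0$ --- a gloss the paper makes as well.
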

\begin{proof}
For part (a), write $f(z)=\displaystyle\frac{P(z)}{Q(z)}$ where $P(z),Q(z)\in \cF[z]$ with $\gcd(P(z),Q(z))=1$. 
Removing finitely many points from $C'$ if necessary, we may assume that $$\gcd(P_t(z),Q_t(z))=1$$ so that
$f_t(z)=\displaystyle\frac{P_t(z)}{Q_t(z)}$ for every
$t\in C'(\Qbar)$.
Following \cite{HS11}, we can define the height of
$f_t$, denoted $\tilde{h}(f_t)$, to be the height of the point whose projective coordinates
are coefficients of $P_t(z)$ and $Q_t(z)$.  
Then \cite[Proposition~6]{HS11} gives that there
are positive constants $c_4$ and $c_5$ depending only on
$d$ such that
$\vert \hhat_{f_t}(x)-h(x)\vert\leq c_4\tilde{h}(f_t)+c_5$
for every $x\in \P^1(\Qbar)$ and $t\in C'(\Qbar)$. 
Since the coefficients of $P_t(z)$ and $Q_t(z)$ are obtained by evaluating at $t$ the functions from $\Qbar(C)$ which are the coefficients of $P(z)$ and $Q(z)$, we get that 
$\tilde{h}(f_t)\leq c_6 h_C(t)+c_7$
for some $c_6$ and $c_7$ that depend only on $C$ and $f$.
This finishes the proof.

Part (b) follows from \cite[Theorem~B.3.2]{HS00}. 
Part (c) is a well-known result of Call-Silverman \cite[Theorem~4.1]{CS93}. Part (d) follows from a result of
Baker \cite{Bak09}.
\end{proof}


\bigskip
\section{The proof of Theorem~\ref{thm:multiplicative dependence}}
\label{sec:useful reductions}
Throughout this section, let $C$, $\cF$, and $a,b\in \P^1(\cF)$
be as in Conjecture~\ref{conj:diagonal}. Fix a height
$h_C$ on $C(\Qbar)$ as before. Let $f(z),g(z)\in \cF(z)$
with $d_1=\deg(f)\geq 2$
and $d_2=\deg(g)\geq 2$. In this section, we \emph{allow} the possibility that both $f$ and $g$ are special. Let $S$ be the set defined in Conjecture~\ref{conj:diagonal}.  We start with an easy case:
 
\begin{prop}\label{prop:preperiodic case}
Assume that $a$ is preperiodic for $f$. Then one
of the following holds:
\begin{itemize}
\item [(i)] Either $(f,a)$ or $(g,b)$ is isotrivial.
\item [(ii)] There exist $m,n\in\N_0$ such that $f^m(a)=g^n(b)$.
\item [(iii)] $S$ has bounded height.
\end{itemize}
\end{prop}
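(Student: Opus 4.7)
The plan is to split into cases based on whether $b$ is preperiodic for $g$, and then argue in each case that we are forced into one of (i), (ii), or (iii). Since $a$ is preperiodic for $f$, the forward orbit $\{f^m(a) : m \geq 0\} = \{\alpha_1, \ldots, \alpha_k\}$ is a finite subset of $\P^1(\cF)$. Throughout I may assume (ii) fails, i.e.\ $\alpha_i \neq g^n(b)$ in $\P^1(\cF)$ for every $i \leq k$ and $n \geq 0$.

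In the first case, suppose $b$ is also preperiodic for $g$, with orbit $\{\beta_1, \ldots, \beta_l\}$. Then the failure of (ii) forces $\alpha_i \neq \beta_j$ in $\P^1(\cF)$ for every pair $(i, j)$, so each equation $\alpha_i(t) = \beta_j(t)$ cuts out a finite subset of $C(\Qbar)$. Outside the finite set of $t$ where $f$ or $g$ has bad reduction, membership $t \in S$ forces one such equality, and hence $S$ is finite and (iii) holds trivially.

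In the second case, $b$ is not preperiodic for $g$. By Proposition~\ref{prop:height function fields}(d), either $(g,b)$ is isotrivial (giving conclusion (i)) or $\hhat_g(b) > 0$. In the latter situation, for $t \in S$ with good reduction I choose indices $i \leq k$ and $n \geq 0$ with $\alpha_i(t) = g_t^n(b(t))$ and compute
$$d_2^n\, \hhat_{g_t}(b(t)) = \hhat_{g_t}(\alpha_i(t)) \leq A \cdot h_C(t) + B,$$
where the inequality combines Proposition~\ref{prop:height function fields}(a) with the bound $h(\alpha_i(t)) = O(h_C(t))$; the latter follows from Proposition~\ref{prop:height function fields}(b) (applied to each $\alpha_i$, which is a morphism $C \to \P^1$) together with the standard comparison between Weil heights associated to divisors of the same degree. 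The constants $A, B$ depend only on $C$, $g$, and the finite collection $\{\alpha_i\}$. Proposition~\ref{prop:height function fields}(c) then yields $\hhat_{g_t}(b(t)) \geq (\hhat_g(b)/2) \cdot h_C(t)$ once $h_C(t)$ is sufficiently large, and combining this with the display forces $d_2^n$ to be bounded independently of $t$. Thus $n$ is bounded by some $N$, and for each of the finitely many pairs $(i,n)$ with $i \leq k$, $n \leq N$, the failure of (ii) says $\alpha_i \neq g^n(b)$ in $\cF$, so the specialized equation $\alpha_i(t) = g_t^n(b(t))$ has only finitely many solutions. Consequently $S \cap \{t : h_C(t) > M\}$ is finite for some $M$, and (iii) holds.

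The only substantive step is the uniform boundedness of $n$ in the second case, and this rests entirely on $\hhat_g(b) > 0$; this is exactly why the isotriviality dichotomy of Proposition~\ref{prop:height function fields}(d) must be invoked before any height comparison is attempted. Everything else amounts to combining the finiteness given by preperiodicity with routine height estimates from Proposition~\ref{prop:height function fields}.
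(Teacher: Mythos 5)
Your argument is correct and follows essentially the same route as the paper: split on whether $b$ is $g$-preperiodic, dispose of the preperiodic case by finitely many nontrivial equations, and in the other case use Proposition~\ref{prop:height function fields}(d) to get $\hhat_g(b)>0$ and then parts (a)--(c) to force the iterate $n$ to be bounded, reducing again to finitely many equations. The only cosmetic difference is that you phrase the height comparison as explicit inequalities bounding $d_2^n$, whereas the paper runs the same estimate as a limit argument along a sequence $t_j$ with $h_C(t_j)\to\infty$ and derives a contradiction.
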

\begin{proof}
We assume that conditions~(i)-(iii) do not hold and we will derive a contradiction.  
 The problem is easy when $b$ is $g$-preperiodic. Indeed $S$ is the set of $t\in C(\Qbar)$ satisfying finitely many equations; hence either $S$ is finite or one of those equations holds for every
$t$. 

Now assume that $b$ is not $g$-preperiodic. Since we assumed that condition~(i) does not hold, then Propositon~\ref{prop:height function fields} (for the pair $(g,b)$) yields that $\hhat_g(b)>0$. 

Let $(t_j)_{j\in\N}$ be in $S$ such that $h_C(t_j)\to \infty$ as $j\to\infty$. Since 
$a$ is $f$-preperiodic, after restricting to a subsequence of
$(t_j)_{j\in\N}$ if necessary, we have the following. There
exist $M\in\N$ and a sequence $(n_j)_{j\in\N}\subset \N$  
such that
for $\alpha=f^M(a)$ and 
for every $j$, we have
\begin{equation}
\label{new equation 1}
\alpha(t_j)=g_{t_j}^{n_j}\left(b(t_j)\right).
\end{equation}
Furthermore, since we assumed that condition~(ii) does not hold, then we may assume (perhaps at the expense of replacing $\{t_j\}$ by a subsequence) that $n_j\to \infty$ as $j\to\infty$. 
To avoid triple indices, let $\hhat_{(j)}$ denote the canonical
height on $\P^1(\Qbar)$ associated to
the rational function $g_{t_j}$. Applying $\hhat_{(j)}$ to equation~\eqref{new equation 1} and dividing by $h_C(t_j)$, we have
\begin{equation}
\label{new equation 2}
\frac{\hhat_{(j)}(\alpha(t_j))}{h_C(t_j)}=d_2^{n_j}\frac{\hhat_{(j)}(b(t_j))}{h_C(t_j)}
\end{equation}
for every $j$. By Proposition~\ref{prop:height function fields}:
\begin{equation}
\label{new equation 3}
\lim_{j\to\infty}\frac{\hhat_{(j)}(\alpha(t_j))}{h_C(t_j)}=\hhat_g(\alpha)<\infty .
\end{equation}
On the other hand, using the fact that $n_j\to\infty$ as $j\to\infty$ and also that $$\lim_{j\to\infty} \frac{\hhat_{(j)}(b(t_j))}{h_C(t_j)}=\hhat_g(b)>0,$$ 
we get
\begin{equation}
\label{new equation 4}
\lim_{j\to\infty}d_2^{n_j}\frac{\hhat_{(j)}(b(t_j))}{h_C(t_j)}=\infty.
\end{equation}
Equations \eqref{new equation 2}, \eqref{new equation 3} and \eqref{new equation 4} yield a contradiction.
\end{proof}

For the rest of this subsection, \emph{we further assume that $d_1$
and $d_2$ are multiplicatively dependent}. Define
$$\cM=\{(m,n)\in\N^2:\ d_1^m\hhat_f(a)=d_2^n\hhat_g(b)\}.$$
Also, we let $C'$ be a Zariski dense open subset of $C$ such that both $f$ and $g$ have good reduction at the points of $C'(\Qbar)$. We let  
$$S_{\cM}=\{t\in C'(\Qbar):\ \text{there exist
$(m,n)\in\cM$ such that $f_t^m(a(t))=g_t^n(b(t))$}\}$$ 
as in Theorem~\ref{thm:multiplicative dependence}. 

\begin{proof}[Proof of Theorem~\ref{thm:multiplicative dependence}]
We assume that both $(f,a)$ and $(g,b)$ are not isotrivial. 
We also assume that $S_{\cM}$ has bounded height and we need to
prove that either $S$ has bounded height or there exist
$m,n\in\N$ such that
$f^m(a)=g^n(b)$. 
If either $a$ is $f$-preperiodic or $b$ is $g$-preperiodic then
 Proposition~\ref{prop:preperiodic case} finishes our proof. 
So, from now on, assume that neither $a$ nor $b$ is preperiodic. 
By Proposition~\ref{prop:height function fields}~(d), we have
$\hhat_f(a)>0$ and $\hhat_g(b)>0$.

By Proposition~\ref{prop:height function fields}~(a), there
exist positive constants $c_8$ and $c_9$ depending only
on $C$, $f$, and $g$ such that
\begin{equation}\label{eq:hft hgt h}
\max\{\vert \hhat_{f_t}(x)-h(x)\vert,\vert \hhat_{g_t}(x)-h(x)\vert\}\leq c_8h_C(t)+c_9
\end{equation}
for every $t\in C'(\Qbar)$ and every $x\in \P^1(\Qbar)$.

Let $\delta\geq 2$ be an integer such that both $d_1$ and 
$d_2$ are powers of $\delta$. Since the set
$\{\delta^s:\ s\in\Z\}$ is discrete in $\R_{>0}$, there
is a positive
lower bound $c_{10}$ for the sets
$$\{\vert \hhat_f(a)-\delta^s\hhat_g(b)\vert:\ s\in\Z\}\setminus\{0\}\ \text{and}$$
$$\{\vert \hhat_g(b)-\delta^s\hhat_f(a)\vert:\ s\in\Z\}\setminus\{0\}.$$

Choose $\epsilon\in (0,c_{10}/3)$. By Proposition~\ref{prop:height function fields}~(c), there exist $c_{11}$ depending only on $C$, $f$, and $g$ such that for every
$t\in C'(\Qbar)$ with $h_C(t)\geq c_{11}$, we have:
\begin{equation}\label{eq:epsilon}
\max\left\{\left\vert\frac{\hhat_{f_t}(a(t))}{h_C(t)}-\hhat_f(a)\right\vert,
\left\vert\frac{\hhat_{g_t}(b(t))}{h_C(t)}-\hhat_g(b)\right\vert\right\}\leq \epsilon.
\end{equation}

Let $t\in S\setminus S_{\cM}$
and assume for the moment that $h_C(t)\geq c_{11}$. There exist
$m,n\in\N_0$ with $d_1^m\hhat_f(a)\neq d_2^n\hhat_g(b)$
and $f_t^m(a(t))=g_t^n(b(t))$.
This gives $h(f_t^m(a(t)))=h(g_t^n(b(t)))$ which together with \eqref{eq:hft hgt h} yield:
\begin{equation}
\vert \hhat_{f_t}(f_t^m(a(t)))-\hhat_{g_t}(g_t^n(b(t)))\vert
\leq 2c_8h_C(t)+2c_9.
\end{equation}
By properties of canonical heights, we have:
\begin{equation}\label{eq:d1md2n}
\vert d_1^m\hhat_{f_t}(a(t))-d_2^n\hhat_{g_t}(b(t))\vert
\leq 2c_8h_C(t)+2c_9.
\end{equation}
We consider the case $d_1^m\leq d_2^n$. Inequality 
\eqref{eq:d1md2n} yields:
\begin{equation}\label{eq:d1m leq d2n}
\left\vert\frac{d_1^m}{d_2^n}\frac{\hhat_{f_t}(a(t))}{h_C(t)}
-\frac{\hhat_{g_t}(b(t))}{h_C(t)}
\right\vert \leq \frac{2c_8}{d_2^n}+\frac{2c_9}{d_2^nh_C(t)}\leq
\frac{2c_8}{d_2^n}+\frac{2c_9}{d_2^nc_{11}}.
\end{equation}
Using $\displaystyle\frac{d_1^m}{d_2^n}\leq 1$ and 
inequality \eqref{eq:epsilon}, we have:
\begin{equation}\label{eq:-2epsilon}
\left\vert\frac{d_1^m}{d_2^n}\hhat_f(a)-\hhat_g(b)\right\vert-2\epsilon
\leq \left\vert\frac{d_1^m}{d_2^n}\frac{\hhat_{f_t}(a(t))}{h_C(t)}
-\frac{\hhat_{g_t}(b(t))}{h_C(t)}
\right\vert.
\end{equation}
The left-hand side of \eqref{eq:-2epsilon} is
at least $c_{10}-2\epsilon$ which is greater than $\epsilon$ due
to the choice of $c_{10}$ and $\epsilon$. Together with 
\eqref{eq:d1m leq d2n} and \eqref{eq:-2epsilon}, we have:
$\displaystyle\epsilon\leq \frac{2c_8}{d_2^n}+\frac{2c_9}{d_2^nc_{11}}$
which implies 
$\displaystyle d_2^n\leq \frac{1}{\epsilon}\left(2c_8+\frac{2c_9}{c_{11}}\right)$.

The case $d_1^m\geq d_2^n$ is treated by completely similar arguments. We have proved the following: if $t\in S\setminus S_{\cM}$ satisfies $h_C(t)\geq c_{11}$ then
$$\max\{d_1^m,d_2^n\}\leq \frac{1}{\epsilon}\left(2c_8+\frac{2c_9}{c_{11}}\right).$$
Note that there are only finitely many such pairs $(m,n)$. Hence 
such a $t$ satisfies one of finitely many equations. We conclude that either there are finitely many such $t$'s or one of those equations holds for every $t$. This finishes the proof.
\end{proof}

\bigskip
\section{Upper bounds on heights of polynomials}
\label{sec:upper bound}

In this section, we provide some of the technical ingredients on heights of polynomials needed for the proofs of Theorem \ref{thm:z^2+t} and other cases of Conjecture \ref{conj:diagonal}.

Fix $d\geq 2$ and let
$$P(z)=z^d+a_1z^{d-1}+\ldots+a_{d-1}z+a_d$$
be the generic monic polynomial 
of degree $d$ in $z$. For each $n\in\N$, write
$$P^n(z)=\sum_{i=0}^{d^n}A_{n,i}z^{d^n-i}$$
where $A_{n,i}\in\Z[a_1,\ldots,a_d]$. Note that $A_{n,0}=1$ for
every $n$. For each $p\in M_{\Q}$,
$n\in\N$, and $0\leq i\leq d^n$, our first goal is to give an upper bound
on the total degree $\deg(A_{n,i})$ and 
the maximum $\vert A_{n,i}\vert_p$ of the $p$-adic values of the coefficients of  $A_{n,i}\in \Z[a_1,\ldots,a_d]$ (see Subsection~\ref{subsec:heights polynomials}). 
We have:
\begin{prop}\label{prop:deg Ani}
For every $n\in\N$ and $0\leq i\leq d^n$, we have
$\deg(A_{n,i})\leq i$.
\end{prop}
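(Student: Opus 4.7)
The plan is induction on $n$ via the composition recursion $P^{n+1}(z) = P(P^n(z))$. The base case $n=1$ is immediate: $A_{1,0}=1$ has degree $0$, and each $A_{1,i}=a_i$ has degree $1\le i$ for $1\le i\le d$.

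For the inductive step, I would write $P(w)=\sum_{k=0}^d a_{d-k}w^k$ with the convention $a_0:=1$. Substituting $w=P^n(z)$ and expanding the $k$-th power gives
\[
(P^n(z))^k=\sum_{j_1,\ldots,j_k\ge 0} A_{n,j_1}\cdots A_{n,j_k}\, z^{kd^n-(j_1+\cdots+j_k)},
\]
so the contribution to $A_{n+1,i}$ from the term indexed by $k$ is $a_{d-k}$ times the sum of products $A_{n,j_1}\cdots A_{n,j_k}$ with $j_1+\cdots+j_k=i-(d-k)d^n$ (vacuous when $i<(d-k)d^n$). By the inductive hypothesis each such product has total degree at most $j_1+\cdots+j_k=i-(d-k)d^n$, and the prefactor $a_{d-k}$ contributes at most $1$ to the total degree (exactly $0$ when $k=d$). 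For $k<d$ this bounds the degree by $1+i-(d-k)d^n\le i$, using $(d-k)d^n\ge d^n\ge 1$; for $k=d$ the bound is $i$. Taking the maximum over $k$ gives $\deg(A_{n+1,i})\le i$.

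The whole argument is routine bookkeeping; there is no serious obstacle. The only mild care required is to separate the leading term ($k=d$, with no $a_{d-k}$ prefactor) from the remaining terms, and to note that $d^n\ge 1$ absorbs the extra factor contributed by $a_{d-k}$ when $k<d$. I briefly considered a slicker route via weighted homogeneity --- assigning weight $j$ to $a_j$ and hoping $A_{n,i}$ would be isobaric of weight $i$, which would immediately give the bound since $\sum k_j\le\sum jk_j$ --- but this shortcut fails already at $n=2$: for $d=2$ one computes $A_{2,2}=a_1^2+2a_2+a_1$, which contains the weight-$1$ monomial $a_1$ and so is not isobaric. The weaker assertion that every monomial of $A_{n,i}$ has weight \emph{at most} $i$ does hold (by a parallel induction) and likewise implies the proposition, but it offers no real simplification over the direct degree count above.
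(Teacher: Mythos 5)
Your proof is correct and follows essentially the same route as the paper: induction on $n$ through the expansion of $P^{n+1}=P\circ P^n$, applying the inductive degree bound to each product $A_{n,j_1}\cdots A_{n,j_k}$ with $j_1+\cdots+j_k$ determined by the target exponent, and absorbing the extra degree from the coefficient $a_{d-k}$ using $(d-k)d^n\ge 1$ (the paper uses $d^n\ge 2$ to the same effect). The only difference is notational (indexing by the power $k$ of $P^n$ rather than by the subscript of $a_j$), so there is nothing further to add.
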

\begin{proof}
The proposition holds when $n=1$, we now proceed by induction. 
We have:
$$P^{n+1}(z)=(P^{n}(z))^d+\sum_{j=1}^{d}a_j(P^n(z))^{d-j}=
\left(\sum_{i=0}^{d^n}A_{n,i}z^{d^n-i}\right)^d+\sum_{j=1}^{d}a_j\left(\sum_{i=0}^{d^n}A_{n,i}z^{d^n-i}\right)^{d-j}.$$

Let $0\leq k\leq d^{n+1}$. The coefficient of $z^{d^{n+1}-k}$ 
in $\displaystyle\left(\sum_{i=0}^{d^n}A_{n,i}z^{d^n-i}\right)^d$
is
\begin{equation}\label{eq:i1 to id}
\sum_{(i_1,\ldots,i_d)} A_{n,i_1}\cdots A_{n,i_d}
\end{equation}
where $\sum$ is taken over the tuples $(i_1,\ldots,i_d)$  
in $\{0,\ldots,d^n\}^{d}$
such that $(d^n-i_1)+\ldots+(d^n-i_d)=d^{n+1}-k$,
or equivalently $i_1+\ldots+i_d=k$. By the induction hypothesis,
the total degree of each term in \eqref{eq:i1 to id} is
$$\deg(A_{n,i_1}\ldots A_{n,i_k})\leq i_1+\ldots+i_d =k.$$

For $1\leq j\leq d$, the coefficient of 
$z^{d^{n+1}-k}$ in 
$a_j \displaystyle\left(\sum_{i=0}^{d^n}A_{n,i}z^{d^n-i}\right)^{d-j}$
is
\begin{equation}\label{eq:i1 to id-j}
\sum_{(i_1,\ldots,i_{d-j})} a_j A_{n,i_1}\ldots  A_{n,i_{d-j}}
\end{equation}
where $\sum$ is taken over the tuples $(i_1,\ldots,i_{d-j})$
in $\{0,\ldots,d^n\}^{d-j}$
such that $(d^n-i_1)+\ldots+(d^n-i_{d-j})=d^{n+1}-k$, or equivalently $i_1+\ldots+i_{d-j}=d^n(d-j)-d^{n+1}+k=k-d^nj$. By the induction hypothesis, the total degree of each term 
in \eqref{eq:i1 to id-j} is
$$\deg(a_kA_{n,i_1}\ldots A_{n,i_{d-j}})\leq 1+i_1+\ldots+i_{d-j}
=1+k-d^nj<k$$
since $j\geq 1$ and $d^n\geq 2$.
Overall, the coefficient of $z^{d^{n+1}-k}$ is a polynomial in the
$a_i$'s whose total degree is at most $k$. This finishes the proof.
\end{proof}

For the non-archimedean places $p\in M_{\Q}^0$, we have
the following estimates:
\begin{prop}\label{prop:Ani at prime p}
For $n\in \N$ and $1\leq i\leq d^n$, we have:
$$\vert A_{n,i}\vert_p\leq \min\{1,\vert d\vert_p^{n-i}\}.$$
\end{prop}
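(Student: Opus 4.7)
The plan is to induct on $n$, using the recursive expansion from the proof of Proposition~\ref{prop:deg Ani}. The bound $|A_{n,i}|_p \leq 1$ is automatic because $A_{n,i} \in \mathbb{Z}[a_1,\ldots,a_d]$; the substantive content is $|A_{n,i}|_p \leq |d|_p^{n-i}$ in the range $1 \leq i \leq n$, since for $i > n$ the quantity $|d|_p^{n-i}$ already exceeds $1$. The base case $n=1$ is immediate from $A_{1,i}=a_i$.

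For the inductive step, I invoke the expansion
$$A_{n+1,k} = \sum_{i_1+\cdots+i_d=k} A_{n,i_1}\cdots A_{n,i_d} \;+\; \sum_{j=1}^{d} a_j \sum_{i_1+\cdots+i_{d-j}=k-d^n j} A_{n,i_1}\cdots A_{n,i_{d-j}}$$
and restrict attention to $1 \leq k \leq n+1$, the range where the bound $|d|_p^{n+1-k}$ carries content (for $k \geq n+2$ the inequality $|A_{n+1,k}|_p \leq 1$ suffices and is automatic).

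For the first sum, the inductive hypothesis rewrites as $|A_{n,i_l}|_p \leq |d|_p^{\max(n-i_l,0)}$, so each term has $p$-adic value at most $|d|_p^{E}$ with $E = \sum_l \max(n-i_l,0)$. Because $k \leq n+1$ forces each $i_l \leq n+1$, I verify that $E \geq n+1-k$ in the two possible sub-cases: when every $i_l \leq n$ one gets $E = dn-k$, and the inequality $(d-1)n \geq 1$ (valid for $d \geq 2$, $n \geq 1$) yields $dn-k \geq n+1-k$; when some $i_l = n+1$, then $k = n+1$ and the remaining indices vanish, giving $E = (d-1)n \geq 0 = n+1-k$. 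The non-archimedean triangle inequality then bounds the entire first sum by $|d|_p^{n+1-k}$.

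For the second sum, the requirement $k \geq d^n j$ with $j \geq 1$ and $k \leq n+1$ forces $d^n \leq n+1$, which holds only in the edge case $d=2$, $n=1$, $k=2$. There the contribution reduces to $a_1\cdot A_{1,0}^{d-1} = a_1$, satisfying $|a_1|_p \leq 1 = |d|_p^{n+1-k}$. Outside that case the second sum is empty. Combining the two pieces closes the induction. I expect the main obstacle to be purely combinatorial bookkeeping: confirming that $E \geq n+1-k$ across all admissible configurations of $(i_1,\ldots,i_d)$, and isolating the lone edge case $d=2, n=1$ where the second sum is non-empty. Once these are handled, the estimate follows from the non-archimedean triangle inequality and the identity $\min\{1,|d|_p^{n-i}\} = |d|_p^{\max(n-i,0)}$.
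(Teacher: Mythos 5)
There is a genuine gap in the treatment of the first sum. Your inductive hypothesis gives $|A_{n,i}|_p \leq \min\{1,|d|_p^{n-i}\} = |d|_p^{\max(n-i,0)}$ only for $1 \leq i \leq d^n$; it says nothing about $i=0$, where $A_{n,0}=1$ identically. Yet your computation $E = \sum_l \max(n-i_l,0) = dn-k$ (when all $i_l \leq n$) credits each index with $i_l=0$ a contribution of $n$ to the exponent, i.e.\ implicitly asserts $|A_{n,0}|_p \leq |d|_p^{n}$. That is false whenever $p \mid d$, which is precisely the case of interest (for $p\nmid d$ the whole proposition is trivial). The correct per-term exponent is $E = \sum_{l:\, i_l\geq 1}\max(n-i_l,0) = \ell n - k$, where $\ell$ is the number of nonzero indices in the tuple, and for $\ell=1$ this gives only $|d|_p^{n-k}$, which is strictly weaker than the target $|d|_p^{n+1-k}$.

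Concretely, take $d=2$, $k=1$: your first sum consists of $d$ tuples, each a permutation of $(k,0,\dots,0)$, each producing the \emph{same} term $A_{n,k}$. The ultrametric bound $|\text{sum}|_p \leq \max$ then yields only $|A_{n,k}|_p \leq |d|_p^{n-k}$, but the truth is that those $d$ identical terms add to $d\,A_{n,k}$, and it is the factor $|d|_p$ on $d\cdot A_{n,k}$ that rescues the estimate; a term-by-term maximum cannot see it. This is exactly what the paper's proof exploits: it first writes $(z^{d^n}+Y)^d = z^{d^{n+1}} + \sum_{\ell=1}^d \binom{d}{\ell}z^{d^n(d-\ell)}Y^\ell$ with $Y=\sum_{i\geq 1}A_{n,i}z^{d^n-i}$, so the multiplicity $\binom{d}{\ell}$ is pulled out explicitly. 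The $\ell=1$ contribution then carries $\bigl|\binom{d}{1}\bigr|_p=|d|_p$, yielding exactly $|d|_p^{n+1-k}$, while for $\ell\geq 2$ the bound $|d|_p^{n\ell-k} \leq |d|_p^{n+1-k}$ holds already since $n(\ell-1)\geq 1$. Your handling of the range $k$ and of the second sum is fine, and the paper in fact sidesteps the second sum entirely by noting $k<d^n$, but the first sum requires the binomial regrouping; without it the induction does not close.
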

\begin{proof}
We proceed by induction on $n$. The case $n=1$ is immediate. Let 
$k\in\{1,\ldots,d^{n+1}\}$, since
$A_{n+1,k}\in \Z[a_1,\ldots,a_d]$ we have $\vert A_{n+1,k}\vert_p\leq 1$. It remains to show
$\vert A_{n+1,k}\vert_p\leq \vert d\vert_p^{n+1-k}$. When
$k\geq n+1$, this holds trivially since 
$\vert d\vert_p^{n+1-k}\geq 1$. From now on, we assume 
$k<n+1$, hence $k<d^n$ (since $d\ge 2$).  

As in the proof of Proposition~\ref{prop:deg Ani}, we
have:
$$P^{n+1}(z)=\left(z^{d^n}+\sum_{i=1}^{d^n}{A_{n,i}}z^{d^n-i}\right)^d+a_1\left(z^{d^n}+\sum_{i=1}^{d^n}{A_{n,i}}z^{d^n-i}\right)^{d-1}+\ldots$$

Since $k<d^n$, the coefficient of $z^{d^{n+1}-k}$ must
come solely from
$$\left(z^{d^n}+\sum_{i=1}^{d^n}{A_{n,i}}z^{d^n-i}\right)^d
=z^{d^{n+1}}+\sum_{\ell=1}^d\binom{d}{\ell} z^{d^n(d-\ell)}\left(\sum_{i=1}^{d^n}{A_{n,i}}z^{d^n-i}\right)^{\ell}.$$

For each $\ell\in\{1,\ldots,d\}$, the coefficient of
$z^{d^{n+1}-k}$ in $\displaystyle\binom{d}{\ell} z^{d^n(d-\ell)}\left(\sum_{i=1}^{d^n}{A_{n,i}}z^{d^n-i}\right)^{\ell}$
is
\begin{equation}\label{eq:coefficient at ell}
\binom{d}{\ell}\sum_{(i_1,\ldots,i_\ell)} A_{n,i_1}\ldots A_{n,i_\ell}
\end{equation}
where $\sum$ is taken over the tuples $(i_1,\ldots,i_\ell)$
in $\{1,\ldots,d^n\}^{\ell}$ such that
$d^n(d-\ell)+(d^n-i_1)+\ldots+(d^n-i_\ell)=d^{n+1}-k$, or
equivalently $i_1+\ldots+i_\ell=k$. For such a tuple $(i_1,\ldots,i_\ell)$, by Lemma~\ref{lem:Gauss-Gelfond} and the induction hypothesis, we have
\begin{equation}\label{eq:Ani at p and ell}
\left\vert\binom{d}{\ell} A_{n,i_1}\ldots A_{n,i_\ell}\right\vert_p \leq \left\vert\binom{d}{\ell}\right\vert_p
\vert d\vert_p^{n\ell-i_1-\ldots-i_\ell}= \left\vert\binom{d}{\ell}\right\vert_p
\vert d\vert_p^{n\ell-k}.
\end{equation}
The right-hand side of \eqref{eq:Ani at p and ell}
is equal to $\vert d\vert_p^{n+1-k}$ when $\ell=1$ and is at most
$\vert d\vert_p^{n+1-k}$ when $\ell\geq 2$. This finishes
the proof.
\end{proof}

\begin{remark}
The upper bound $\vert d\vert_p^{n-i}$ in Proposition~\ref{prop:Ani at prime p} is crucial for the construction of $p$-adic 
B\"ottcher coordinates when $p\mid d$.
\end{remark}

For the archimedean place of $M_\Q$, we have the following:
\begin{prop}\label{prop:Ani at archimedean v}
Write $M_{\Q}^{\infty}=\{v\}$, recall the notation $\ell_{1,v}(P)$ in Subsection~\ref{subsec:heights polynomials}. For every $n\in\N$
and $0\leq i\leq d^n$, we have: 
$$\vert A_{n,i}\vert_v\leq \ell_{1,v}(A_{n,i})\leq 2^i\binom{d^n}{i}< 4^{d^n}.$$
\end{prop}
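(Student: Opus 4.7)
The plan is to reduce the multivariable statement to a coefficient bound on a single-variable polynomial and then prove that bound by induction on $n$. The first inequality $|A_{n,i}|_v \leq \ell_{1,v}(A_{n,i})$ is the triangle inequality at the archimedean place, and the last inequality $2^i\binom{d^n}{i} < 4^{d^n}$ follows from $\sum_i 2^i\binom{d^n}{i} = 3^{d^n}$. For the middle inequality, a quick induction on $n$ using $P^{n+1} = \sum_{j=0}^{d} a_j (P^n)^{d-j}$ shows that every $A_{n,i}$ has nonnegative integer coefficients in $a_1,\ldots,a_d$; hence $\ell_{1,v}(A_{n,i}) = A_{n,i}(1,\ldots,1)$, and this value equals the coefficient of $z^{d^n-i}$ in the specialization $Q^n(z) := P^n(z)|_{a_j=1}$, where $Q(z) := 1 + z + \cdots + z^d$. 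The task thus becomes the coefficient-wise polynomial inequality $Q^n(z) \leq (z+2)^{d^n}$.

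I will prove the stronger coefficient-wise inequality $Q^n(z) \leq (z+c_n)^{d^n}$ by induction on $n$, where $c_1 := 1$ and $c_{n+1}$ is defined by $c_{n+1}^{d^n} = c_n^{d^n} + 1$. The base case is immediate: $[z^k]\, Q(z) = 1 \leq \binom{d}{k} = [z^k]\,(z+1)^d$. For the inductive step, substituting $u = Q^n(z)$ into the base-case relation $Q(u) \leq (u+1)^d$ (a coefficient-wise inequality in $u$, preserved by substitution of the nonnegative-coefficient polynomial $Q^n(z)$) gives $Q^{n+1}(z) \leq (Q^n(z)+1)^d$. The inductive hypothesis then yields $(Q^n(z)+1)^d \leq ((z+c_n)^{d^n}+1)^d$, and the definition of $c_{n+1}$ is precisely arranged so that $(z+c_n)^{d^n}+1 \leq (z+c_{n+1})^{d^n}$ coefficient-wise (equality on the constant term $c_n^{d^n}+1 = c_{n+1}^{d^n}$; comparison on higher coefficients because $c_n \leq c_{n+1}$). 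Taking $d$-th powers, which preserves coefficient-wise inequalities among nonnegative-coefficient polynomials, produces $Q^{n+1}(z) \leq (z+c_{n+1})^{d^{n+1}}$, closing the induction.

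To conclude I need $c_n < 2$ for all $n$. The positive integers $e_n := c_n^{d^n}$ satisfy $e_1 = 1$ and the recurrence $e_{n+1} = (e_n+1)^d$, and an easy induction using the elementary bound $(M-1)^d \leq M^d - 2$ for $M \geq 2$ and $d \geq 2$ (which follows from the mean-value inequality $M^d - (M-1)^d \geq d(M-1)^{d-1} \geq d$) gives $e_n \leq 2^{d^n} - 2$, whence $c_n = e_n^{1/d^n} < 2$. Since $(z+c)^{d^n}$ is coefficient-wise non-decreasing in $c \geq 0$, this gives $Q^n(z) \leq (z+c_n)^{d^n} \leq (z+2)^{d^n}$, and extracting the coefficient of $z^{d^n-i}$ produces the desired bound $\ell_{1,v}(A_{n,i}) \leq \binom{d^n}{i}\, 2^i$.

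The main obstacle is that a naïve induction with the fixed target $Q^n(z) \leq (z+2)^{d^n}$ does not close: at the composition step, $Q(u) \leq (u+1)^d$ produces an additive $+1$ that forces $Q^{n+1}$ to be bounded only by $((z+2)^{d^n} + 1)^d$, whose constant term $(2^{d^n}+1)^d$ strictly exceeds $2^{d^{n+1}}$. The resolution is to let $c_n$ drift upward very slowly; because the exponent $d^n$ grows super-exponentially, the recursion $c_{n+1}^{d^n} = c_n^{d^n} + 1$ adds only a minuscule increment to $c_n$ at each step, and the crude integer bound $e_n \leq 2^{d^n} - 2$ keeps $c_n$ comfortably below $2$.
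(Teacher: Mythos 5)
Your proof is correct, and it reaches the paper's bound by a genuinely different route. Both arguments hinge on the same starting observation (which you, like the paper, justify by an easy induction): the $A_{n,i}$ have non-negative coefficients, so it suffices to bound them after a suitable specialization of $a_1,\ldots,a_d$. The paper's proof then uses a short trick: it specializes at the coefficients of $\tilde P(z)=(z+2)^d-2$, which is conjugate to $z^d$ by the translation $z\mapsto z+2$, so that $\tilde P^n(z)=(z+2)^{d^n}-2$ is known in closed form and the values $\tilde A_{n,i}=2^i\binom{d^n}{i}$ (and $2^{d^n}-2$ for $i=d^n$) come out exactly, with no induction on $n$ at all. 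You instead specialize at $a_j=1$, reducing the statement to the coefficient-wise domination $Q^n(z)\le (z+2)^{d^n}$ for $Q(z)=1+z+\cdots+z^d$, and you correctly identify that the naive induction with the fixed constant $2$ does not close; your fix -- replacing $2$ by a drifting constant $c_n$ with $c_{n+1}^{d^n}=c_n^{d^n}+1$ and checking $c_n<2$ via the integer recursion $e_{n+1}=(e_n+1)^d\le 2^{d^{n+1}}-2$ -- is a sound and self-contained workaround, at the cost of an auxiliary sequence and several routine verifications (monotonicity of coefficient-wise inequalities under products and substitution) that the paper's choice of $\tilde P$ makes unnecessary. In short: your argument trades the paper's one-line conjugation-to-the-power-map observation for a longer but elementary induction; the paper's approach additionally yields the exact values of the dominating coefficients rather than just an upper bound.
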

\begin{proof}
A priori, it seems that expanding $P^{n+1}(z)$ and using Lemma~\ref{lem:Gauss-Gelfond} as in the proof of Proposition~\ref{prop:Ani at prime p} would not be enough to prove the proposition; the reason comes from the large factor $2^d$ in Lemma~\ref{lem:Gauss-Gelfond}~(b) (which corresponds to the extra
factor $2^{d^{n+1}}$ in our inductive step). However we can use the following simple trick.

The inequality $\vert A_{n,i}\vert_v\leq \ell_{1,v}(A_{n,i})$
is obvious from the definitions in Subsection~\ref{subsec:heights polynomials}. It remains to prove the other inequality.
Notice that all the polynomials $A_{n,i}\in \Z[a_1,\ldots,a_d]$
have \emph{non-negative} coefficients. Consider the polynomial:
$$\tilde{P}(z)=(z+2)^d-2=z^d+\tilde{a}_1z^{d-1}+\ldots+\tilde{a}_d$$ 
where $\tilde{a}_j\in\N$ for $1\leq j\leq d$. We have:
$$(z+2)^{d^n}-2=\tilde{P}^n(z)=z^{d^n}+\sum_{i=1}^{d^n}\tilde{A}_{n,i}z^{d^n-i}.$$

On the one hand $\tilde{A}_{n,i}=A_{n,i}(\tilde{a}_1,\ldots,\tilde{a}_d)\geq \ell_{1,v}(A_{n,i})$
since $\tilde{a}_j\geq 1$ for every $j$. 
On the other hand, we have $\tilde{A}_{n,i}= 2^i\binom{d^n}{i}$
if $1\leq i<d^n$ and $\tilde{A}_{n,d^n}=2^{d^n}-2$. 
In any case, we have $\tilde{A}_{n,i}\leq 2^i\binom{d^n}{i}<4^{d^n}$. This finishes the proof.
\end{proof}

We have the following application:
\begin{cor}\label{cor:height of polynomials}
Let $K$ be a number field. 
Let $f(z)=z^d+\alpha_1(t)z^{d-1}+\ldots+\alpha_d(t)\in K[t][z]$ and let $a(t)\in K[t]$; in particular,  
$f^n(a)\in K[t]$ for every $n\in\N$. 
\begin{itemize}
	\item [(a)] There exists a finite set of places $\cS\subset M_K$ and positive constants $c_{12}$ and $c_{13}$
	depending only on $K$, $f$, and $a$ such that the following hold.
	\begin{itemize}
		\item [(i)] $\cS$ contains $M_K^{\infty}$.
		\item [(ii)] For every $\fp\in M_K\setminus \cS$ and every
		$m\in\N_0$, we have $\vert f^m(a)\vert_\fp\leq 1$.
		\item [(iii)] For every $\fp\in \cS$ and $m \geq 0$, we have
		$\vert f^m(a)\vert_\fp\leq c_{12}^{d^m}$.
		\item [(iv)] For every $m\geq 0 $ such that $f^m(a)\neq 0$, we have
		$h_{\pol}(f^m(a))\leq c_{13}d^m$.
	\end{itemize}
	
	\item [(b)] Let $D\geq 2$, $g(z)=z^D+\beta_1(t)z^{D-1}+\ldots+\beta_D(t)\in K[t][z]$, and $b(t)\in K[t]$. There exists a positive constant $c_{14}$ depending only on $K$, $f$, $a$, $g$, and $b$ such that for every $m,n \geq 0 $ satisfying 
	$f^m(a)\neq g^n(b)$, we have
	$$h_{\pol}(f^m(a)-g^n(b))  \leq c_{14}\max\{d^m,D^n\}.$$ 
\end{itemize}
\end{cor}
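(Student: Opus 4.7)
The plan is to combine the generic-polynomial estimates of Propositions \ref{prop:deg Ani}--\ref{prop:Ani at archimedean v} with the Gauss and Gelfond inequalities (Lemma \ref{lem:Gauss-Gelfond}), then convert the resulting local bounds into the claimed polynomial-height bound.

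For (a)(ii) I would take $\mathcal{S}$ to consist of $M_K^{\infty}$ together with those finitely many finite places at which some coefficient of $\alpha_1,\ldots,\alpha_d$ or of $a$ has absolute value exceeding $1$; this set is finite because the $\alpha_j$ and $a$ are fixed elements of $K[t]$. For $\mathfrak{p}\notin\mathcal{S}$ we then have $|\alpha_j|_{\mathfrak{p}} \leq 1$ and $|a|_{\mathfrak{p}} \leq 1$, and a short induction on $m$ using
\[
f^{m+1}(a) \;=\; f^m(a)^d + \alpha_1 f^m(a)^{d-1} + \cdots + \alpha_d,
\]
Gauss's lemma, and the ultrametric inequality yields $|f^m(a)|_{\mathfrak{p}} \leq 1$ for every $m \geq 0$.

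For (a)(iii), at each archimedean place $v$ I would expand via the generic formula
\[
f^m(a) \;=\; \sum_{i=0}^{d^m} A_{m,i}(\alpha_1,\ldots,\alpha_d)\,a^{d^m-i}.
\]
Proposition \ref{prop:deg Ani} gives $\deg A_{m,i} \leq i \leq d^m$, and Proposition \ref{prop:Ani at archimedean v} gives $\ell_{1,v}(A_{m,i}) \leq 4^{d^m}$. An iterated application of Gelfond's lemma, together with the crude bound $\deg_t(f^m(a)) = O(d^m)$, then bounds each term $|A_{m,i}(\alpha)\,a^{d^m-i}|_v$ by $C^{d^m}$ for some $C$ depending only on $v$, the $|\alpha_j|_v$, $|a|_v$, and the degrees of the $\alpha_j$ and of $a$. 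Summing the $d^m + 1$ terms yields (iii). Part (a)(iv) is then immediate: by the definition of $h_{\pol}$ and the fact that the local bound at $\mathfrak{p}\notin\mathcal{S}$ is trivial,
\[
h_{\pol}(f^m(a)) \;\leq\; \frac{1}{[K:\Q]}\sum_{\mathfrak{p}\in\mathcal{S}} n_{\mathfrak{p}}\,\log^+ |f^m(a)|_{\mathfrak{p}} \;\leq\; c_{13}\,d^m.
\]

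For (b), I would apply (a) separately to the pairs $(f,a)$ and $(g,b)$ with a common enlarged set of places, and then combine via $|f^m(a) - g^n(b)|_{\mathfrak{p}} \leq \max(|f^m(a)|_{\mathfrak{p}},\,|g^n(b)|_{\mathfrak{p}})$ at finite places and the archimedean triangle inequality (costing a harmless factor of $2$) at infinite places. The main obstacle is the archimedean estimate in (iii): a naive direct induction on $m$ using Gelfond alone on the recursion for $f^{m+1}(a)$ would introduce a factor $2^{d^{m+1}}$ at every step, compounding to a doubly exponential bound in $d^m$. The sharp inequality $\ell_{1,v}(A_{m,i}) \leq 4^{d^m}$ from Proposition \ref{prop:Ani at archimedean v}---obtained via the clever comparison with $\tilde{P}(z) = (z+2)^d - 2$---is precisely what absorbs the Gelfond overhead and keeps the archimedean bound singly exponential in $d^m$.
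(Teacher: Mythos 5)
Your approach matches the paper's proof in strategy and in all essentials: express $f^m(a)$ via the universal coefficients $A_{m,i}$, invoke Propositions~\ref{prop:deg Ani}--\ref{prop:Ani at archimedean v} together with Lemma~\ref{lem:Gauss-Gelfond}, obtain local bounds at each place, and assemble them into a bound on $h_{\pol}$; part (b) then follows by enlarging $\cS$ and combining termwise, exactly as the paper does. Your closing remark about why the trick in Proposition~\ref{prop:Ani at archimedean v} is needed to keep the archimedean estimate singly exponential is also the right diagnosis.

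One small omission in your write-up of (a)(iii): the claim is that $\vert f^m(a)\vert_\fp\leq c_{12}^{d^m}$ for \emph{every} $\fp\in\cS$, and $\cS$ contains, besides the archimedean places, the finitely many non-archimedean places at which some coefficient of $a$ or of the $\alpha_j$ fails to be a $\fp$-adic integer. You only address the archimedean places. The missing case is handled with the same tools you already use: either run the same induction as in your proof of (ii) via $f^{m+1}(a)=f^m(a)^d+\sum_{j}\alpha_j f^m(a)^{d-j}$, Gauss's lemma, and the ultrametric inequality, which gives $\vert f^m(a)\vert_\fp\leq c^{d^m}$ with $c=\max\{1,\vert a\vert_\fp,\vert\alpha_1\vert_\fp,\ldots,\vert\alpha_d\vert_\fp\}$; or, as the paper does, apply Proposition~\ref{prop:Ani at prime p} (so $\vert A_{m,i}\vert_\fp\leq 1$) together with Proposition~\ref{prop:deg Ani} and Gauss's lemma to each term $A_{m,i}(\alpha)\,a^{d^m-i}$. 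With that case filled in, the argument is complete.
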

\begin{proof}
Let $\cS$ be a finite subset of $M_K$ containing $M_K^\infty$
such that for every
$\fp\in M_K\setminus \cS$, the coefficients of $a(t)$ and the $\alpha_i(t)$'s are $\fp$-adic integers. We have:
$$f^m(z)=z^{d^m}+\sum_{i=1}^{d^m} A_{m,i}(\alpha_1(t),\ldots,\alpha_d(t))z^{d^m-i},$$
therefore
$$f^m(a)=a(t)^{d^m}+\sum_{i=1}^{d^m} A_{m,i}(\alpha_1(t),\ldots,\alpha_d(t))a(t)^{d^m-i}.$$
Lemma~\ref{lem:Gauss-Gelfond} and Proposition~\ref{prop:Ani at prime p} shows that $\vert f^m(a)\vert_\fp\leq 1$ for every
$\fp\in M_K\setminus \cS$. Hence $\cS$ satisfies (i) and (ii) of part (a). 

Let $c_{15}$ be a positive constant such that:
$$\max\{\vert a\vert_\fp,\vert\alpha_1\vert_\fp,\ldots,\vert \alpha_d\vert_\fp\}\leq c_{15}$$
for every $\fp\in \cS$. 
Let $\delta=\max\{\deg(a),\deg(\alpha_1),\ldots,\deg(\alpha_d)\}$.
If $\fp\in\cS$ is non-archimedean, Lemma~\ref{lem:Gauss-Gelfond} and Propositions~\ref{prop:deg Ani}~and~\ref{prop:Ani at prime p} give:
$$\vert f^m(a)\vert_\fp\leq c_{15}^{d^m}$$
for every $m \geq 0$.
If $\fp\in\cS$ is archimedean,  Lemma~\ref{lem:Gauss-Gelfond},
Proposition~\ref{prop:deg Ani}, and Proposition~\ref{prop:Ani at archimedean v} give:
$$\vert f^m(a)\vert_\fp\leq (d^m+1)2^{\delta d^m}4^{d^m}c_{15}^{d^m}$$
for every $m \geq 0$. This shows the existence of $c_{12}$ satisfying (iii) in part (a). 

From the definition of $h_{\pol}$ and the formula
$\displaystyle\sum_{\fp\in M_K,\fp\mid p} n_{\fp}=[K:\Q]$
for every $p\in M_\Q$, we deduce (iv) from (i), (ii), and (iii).

For part (b), we apply part (a) to the pair $(g,b)$. 
By extending $\cS$ and increasing $c_{12}$, we may assume that 
(ii) and (iii) hold for the data 
$(g,b,\cS,c_{12})$.  The desired upper bound on
$h_{pol}(f^m(a)-g^n(b))$
is obtained from the corresponding upper bounds for
$\vert f^m(a)-g^n(b)\vert_\fp$
for $\fp\in M_K$.
\end{proof}


\bigskip
\section{Examples using Eisenstein's criterion}
\label{sec:Eisenstein}

In this section, we prove some special cases of Conjecture \ref{conj:diagonal}.  We begin with a brief discussion of our strategy for proving Theorem \ref{thm:z^2+t}, and then we prove two propositions where the irreducibility step in the proof can be carried out by applying Eisenstein's criterion.

\subsection{The proof strategy for Theorem \ref{thm:z^2+t}}  \label{strategy}
Consider $f(z)=g(z)=z^2+t\in\Q[t][z]$ and $a,b\in \Qbar$.  Assume that $a^2 \not= b^2$.  We have $\hhat_f(a)=\hhat_g(b)=\frac{1}{2}$, hence both $(f,a)$ and $(g,b)$ are not isotrivial.  Also, becuase $a^2\neq b^2$, we have $f^m(a)\neq g^n(b)$ for every $m,n \geq 0$.  Indeed, as a polynomial in $t$, we have that $f^n(a)$ and $g^n(b)$ have both degree $2^{n-1}$ (for $n\ge 1$); so, if $f^m(a)=g^n(b)$, then it must be that $m=n$. On the other hand, the coefficient of $t^{2^{n-1}-1}$ in $f^n(a)$ (respectively in $g^n(b)$) is $2^{n-1}a^2$ (respectively $2^{n-1}b^2$); so, $f^m(a)\ne g^n(b)$ for any $m,n\in\N$.   By Theorem~\ref{thm:multiplicative dependence}, in order to prove that the set 
$$S=\{t\in\Qbar:\ \text{$f_t^m(a)=g_t^n(b)$ for some $m,n\in\N$}\}$$
has bounded height, it suffices to show that the set
$$S_{\cM}=\{t\in\Qbar:\ \text{$f_t^n(a)=g_t^n(b)$ for some $n\in\N$}\}$$
has bounded height. 

Let $K$ be a number field such that 
$a,b\in K$. By Corollary~\ref{cor:roots} and 
Corollary~\ref{cor:height of polynomials}, it suffices to show
that there exists a positive constant $c_{16}$ depending
only on $K$, $f$, $a$, and $b$ such that the following holds.
For every $t_0\in S$, if $N$ denotes the smallest positive integer
such that $f^N(a)(t_0)=g^N(b)(t_0)$, then 
$[K(t_0):K]\geq c_{16}2^N$.  Note that $\deg(f^N(a)-g^N(b))=2^N-1$.  Observing that $f^{N-1}(a)-g^{N-1}(b)$ divides $f^N(a)-g^N(b)$ for all $N$, we aim to prove that the polynomial $\displaystyle \frac{f^N(a)-g^N(b)}{f^{N-1}(a)-g^{N-1}(b)}\in K[t]$
is ``almost irreducible" over $K$.

\subsection{A variant of Theorem \ref{thm:z^2+t}}

\begin{prop}
Let $p$ be a prime and let $d>1$ be a power of $p$. Let 
$f(z)=g(z)=z^d+t\in \Q[t][z]$. Let $a,b\in\Qbar\cap \mathbb{Q}_p$ one of which is
a $p$-adic unit while the other one is in $p\, \Z_p$. Then
the set:
$$\{t\in\Qbar:\ \text{there exist $m,n\in\N$ such that $f^m_t(a_t)=g_t^n(b_t)$}\}$$
has bounded height.
\end{prop}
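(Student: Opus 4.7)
The plan is to apply Theorem~\ref{thm:multiplicative dependence}. Assume without loss of generality that $a$ is a $p$-adic unit and $b \in p\Z_p$. Since $a, b \in \Qbar$ are constants and the iterates $f^n(a), g^n(b) \in \Qbar[t]$ are monic of $t$-degree $d^{n-1}$, we have $\hhat_f(a) = \hhat_g(b) = 1/d$ and so $\cM = \{(n,n) : n \geq 1\}$. The map $f(z) = z^d + t$ is patently non-isotrivial over $\Q(t)$, so neither $(f, a)$ nor $(g, b)$ is isotrivial; and since $|a^{d^m}|_p = 1 \neq |b^{d^n}|_p$ for all $m, n \geq 1$, the constant terms in $t$ of $f^m(a)$ and $g^n(b)$ cannot agree, so $f^m(a) \neq g^n(b)$ for any $m, n$. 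By Theorem~\ref{thm:multiplicative dependence} it therefore suffices to show that
$$S_\cM = \{t \in \Qbar : f_t^n(a) = f_t^n(b) \text{ for some } n \geq 1\}$$
has bounded height.

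Following the strategy of Subsection~\ref{strategy}, fix a number field $K \subset \Q_p$ containing $a$ and $b$, set $L = K(\mu_d)$, and for $t_0 \in S_\cM$ let $N$ be the smallest positive integer with $f_{t_0}^N(a) = f_{t_0}^N(b)$. Since $f(x) - f(y) = x^d - y^d$, I would factor
$$P_N := f^N(a) - f^N(b) = \prod_{\zeta \in \mu_d} \bigl(f^{N-1}(a) - \zeta f^{N-1}(b)\bigr) \in L[t].$$
By minimality of $N$, $t_0$ is not a root of the $\zeta = 1$ factor $P_{N-1}$, hence is a root of $Q_\zeta(t) := f^{N-1}(a) - \zeta f^{N-1}(b)$ for some nontrivial $\zeta \in \mu_d$; writing $d = p^k$, this $\zeta$ is a primitive $p^s$-th root of unity for some $1 \leq s \leq k$.

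The main step is a Newton polygon analysis of $Q_\zeta$ over the completion $L_{\mathfrak{p}'} = \Q_p(\mu_{p^k})$, which is totally ramified over $\Q_p$ with uniformizer $\pi = 1 - \zeta_{p^k}$; normalize $v = v_{\mathfrak{p}'}$ so that $v(\pi) = 1$, hence $v(p) = (p-1)p^{k-1}$ and $v(1-\zeta) = p^{k-s}$. The freshman's dream (valid since $p \mid d$) yields $f^n(z) \equiv z^{d^n} + \sum_{i=0}^{n-1} t^{d^i} \pmod{p}$, so
$$Q_\zeta(t) \equiv \bigl(a^{d^{N-1}} - \zeta b^{d^{N-1}}\bigr) + (1 - \zeta) \sum_{i=0}^{N-2} t^{d^i} \pmod{p}.$$
From this I read off $v([t^0]Q_\zeta) = 0$ (using $v(b^{d^{N-1}}) \geq d^{N-1}(p-1)p^{k-1}$), $v([t^{d^i}]Q_\zeta) = p^{k-s}$ for $0 \leq i \leq N-2$ (the leading coefficient $[t^{d^{N-2}}]Q_\zeta = 1 - \zeta$ exactly, since $f^{N-1}(a), f^{N-1}(b)$ are $t$-monic), and $v([t^j]Q_\zeta) \geq (p-1)p^{k-1} \geq p^{k-s}$ otherwise. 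Thus the Newton polygon of $Q_\zeta$ is a single segment from $(0, 0)$ to $(d^{N-2}, p^{k-s})$ of slope $p^{k-s}/d^{N-2} = 1/p^{k(N-3)+s}$ in lowest terms, so every root $t_0$ satisfies $[L_{\mathfrak{p}'}(t_0) : L_{\mathfrak{p}'}] \geq p^{k(N-3)+s} \geq p \cdot d^{N-3}$, and hence $[K(t_0) : K] \geq p \cdot d^{N-3}$ by standard comparison of local and global degrees.

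Combining with $\deg P_N = d^{N-1}$ and the bound $h_{\pol}(P_N) \leq C \cdot d^N$ from Corollary~\ref{cor:height of polynomials}, Corollary~\ref{cor:roots} then yields
$$h(t_0) \leq \frac{d^{N-1}\log 2 + C d^N}{p \cdot d^{N-3}} = O(d^3),$$
a bound independent of $N$, with the finitely many $t_0$ coming from $N \in \{1, 2\}$ absorbed into the implied constant. The main obstacle will be the Newton polygon step itself --- specifically, verifying the mod-$p$ structure of the coefficients of $Q_\zeta$ via the freshman's dream and correctly tracking the valuations of the cyclotomic units $1 - \zeta$ in the ramified extension $\Q_p(\mu_{p^k})/\Q_p$.
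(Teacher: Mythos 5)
Your proof is correct and shares the same underlying observations as the paper's --- the reduction to $S_\cM$ via Theorem~\ref{thm:multiplicative dependence}, the cyclotomic factorization $f^N(a)-f^N(b)=\prod_{\zeta^d=1}\bigl(f^{N-1}(a)-\zeta f^{N-1}(b)\bigr)$, and the ``freshman's dream'' computation of the coefficients modulo $p$ --- but the irreducibility step is carried out differently. The paper fixes a primitive $p^s$-th root of unity $\zeta$, observes that $\lambda = 1-\zeta$ is a uniformizer of $\Z_p[\zeta]$, and applies Eisenstein's criterion at $\lambda$ to the reversed polynomial $t^{d^{N-2}}Q_\zeta(1/t)$ over $\Q_p(\zeta)$, concluding $[\Q_p(t_0):\Q_p(\zeta)] = d^{N-2}$ exactly. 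You instead run a Newton polygon over the larger field $\Q_p(\mu_{p^k})$. These are close cousins: Eisenstein on the reversed polynomial is precisely the statement that the Newton polygon of $Q_\zeta$ (over the smaller field, with $v(\lambda)=1$) is a single segment from $(0,0)$ to $(d^{N-2},1)$. Working over $\Q_p(\mu_{p^k})$ costs you a factor of $d/p$ in the degree bound ($\geq p\cdot d^{N-3}$ versus $d^{N-2}$), which is harmless for bounded height. One small imprecision: you assert $v\bigl([t^{d^i}]Q_\zeta\bigr)=p^{k-s}$ \emph{exactly} for all $0\leq i\leq N-2$, but for $i<N-2$ these coefficients are $(1-\zeta)$ plus a $\Zbar_p$-multiple of $p$, and in the edge case $p=2$, $s=1$ one has $v(p)=v(1-\zeta)=2^{k-1}$ so cancellation could push the valuation higher. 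Fortunately only $\geq p^{k-s}$ is needed for your single-segment conclusion (the constant term has valuation $0$ and the \emph{leading} coefficient is exactly $1-\zeta$, so those endpoints are pinned), and that inequality holds unconditionally. Also $\deg P_N = d^{N-1}-1$ rather than $d^{N-1}$ (the top-degree terms cancel), but this does not affect the final $O(d^3)$ bound.
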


\begin{proof}
Exactly as discussed in \S\ref{strategy} for $d=2$, we have that $a^d\neq b^d$ implies $f^m(a)\neq g^n(b)$ for every $m,n \geq 0$. 
Therefore, we must only show that there exists a positive constant $c_{17}$ such that for every $N\geq 2$, every root $t_0$ of the polynomial
	$$\frac{f^N(a)-g^N(b)}{f^{N-1}(a)-g^{N-1}(b)}=\prod_{\zeta\neq 1,\zeta^d=1}(f^{N-1}(a)-\zeta f^{N-1}(b))\in\Qbar[t]$$
satisfies $[\Q(t_0):\Q]\geq c_{17}d^N$. 
	 
We will show that for every $d$-th root of unity $\zeta\neq 1$, the polynomial $f^{N-1}(a)-\zeta f^{N-1}(b)\in \Q_p[\zeta][t]$ is irreducible over the cyclotomic field $\Q_p(\zeta)$.  For every $c\in\Z_p$, as easy induction on $N$ yields that 
	 $$f^{N-1}(c)= t^{d^{N-2}}+t^{d^{N-3}}+\ldots+t+c^{d^{N-1}}+R_{N-1,c}(t)$$
where $R_{N-1,c}(t)\in pt\Z_p[t]$ with $\deg_t\left(R_{N-1,c}\right)<d^{N-2}$.

We have that $\lambda=1-\zeta$
is a uniformizer of $\Z_p[\zeta]$ (note that $d$ is a power of $p$). When $N\geq 2$, we have:
$$P(t):=f^{N-1}(a)-\zeta f^{N-1}(b)=(1-\zeta)t^{d^{N-2}}+\sum_{i=0}^{d^{N-2}-1} a_it^i+a^{d^{N-1}}-\zeta b^{d^{N-1}}$$
where $a_i\in \Z_p[\zeta]$ with $\lambda\mid a_i$
for every $i$. The polynomial $P(t)$ is irreducible
over $\Z_p[\zeta]$ since 
$t^{d^{N-2}}P(1/t)$ is Eisenstein (note that our hypothesis on $a$ and $b$ guarantees that $a^{d^{N-1}}-\zeta b^{d^{N-1}}$ is a $p$-adic unit). Hence
$[\Q_p(t_0):\Q_p(\zeta)]= d^{N-2}$
and this finishes the proof.
\end{proof}

\subsection{A second example with quadratic polynomials}
In our next example, $g(z)=z^2$ is special while $f \neq g$ is a quadratic polynomial.
\begin{prop}\label{prop:3z^2+5}
Let $f(z)=3z^2+5$, $g(z)=z^2$, $a=b=t \in \Q[t]$. The set 
$$S=\{t_0\in\Qbar:\ \text{$f^m(t_0)=g^n(t_0)$ for some $m,n\in\N$}\}$$
has bounded height.
\end{prop}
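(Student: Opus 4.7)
The strategy is the one sketched in \S\ref{strategy}: reduce to $S_\cM$ via Theorem~\ref{thm:multiplicative dependence}, then prove irreducibility of an explicit polynomial using Eisenstein's criterion at the prime $p=5$. First I would verify the hypotheses needed for the reduction. Since $\deg(f)=\deg(g)=2$, the degrees are multiplicatively dependent. Viewing $a=b=t\in\P^1(\cF)$ with $\cF=\Qbar(t)$ and noting that $f,g\in\Q[z]$ have constant coefficients, an easy computation gives $\hhat_f(t)=\hhat_g(t)=1$ (the iterates $f^n(t)$ and $g^n(t)$ have $\cF$-height $2^n$), so $\cM=\{(n,n):n\geq 0\}$. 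By Proposition~\ref{prop:height function fields}(d), neither $(f,t)$ nor $(g,t)$ is isotrivial, since $t$ is not preperiodic (its forward orbit under $f$ or $g$ consists of polynomials of distinct degrees) and both canonical heights are nonzero. Moreover, $f^m(t)\neq g^n(t)$ in $\cF$ for every $(m,n)\neq(0,0)$: the right-hand side is the monomial $t^{2^n}$, while $f^m(t)$ has nonzero constant term $f^m(0)$ for $m\geq 1$. Thus Theorem~\ref{thm:multiplicative dependence} reduces the problem to bounding the height of
$$S_\cM=\{t_0\in\Qbar:\ f^n(t_0)=t_0^{2^n}\ \text{for some}\ n\geq 1\}.$$

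The heart of the proof is the claim that for every $n\geq 1$, the polynomial $P_n(t):=f^n(t)-t^{2^n}\in\Z[t]$ is Eisenstein at $p=5$ and hence irreducible over $\Q$ of degree $2^n$. Since $f(z)\equiv 3z^2\pmod 5$, an easy induction gives $f^n(t)\equiv 3^{2^n-1}t^{2^n}\pmod 5$, so every non-leading coefficient of $P_n$ lies in $5\Z$. The leading coefficient $3^{2^n-1}-1$ is a $5$-adic unit because the order of $3$ modulo $5$ is $4$ and $2^n-1$ is odd, so $3^{2^n-1}\not\equiv 1\pmod 5$. Finally, $v_5(f(0))=v_5(5)=1$, and if $v_5(a)=1$ then $v_5(3a^2+5)=v_5(5)=1$ (since $v_5(3a^2)=2>1$), so $v_5(P_n(0))=v_5(f^n(0))=1$ by induction. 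This is exactly the hypothesis of Eisenstein's criterion (in its non-monic form for $\Z[t]$).

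For any $t_0\in S_\cM$ satisfying $f^n(t_0)=g^n(t_0)$, irreducibility of $P_n$ gives $[\Q(t_0):\Q]=2^n$, so all $2^n$ Galois conjugates of $t_0$ are roots of $P_n$. To apply Corollary~\ref{cor:roots} I need the estimate $h_{\pol}(P_n)\leq C\cdot 2^n$. Since $f$ is not monic, I would conjugate by $\mu(z)=3z$ to the monic map $\tilde{f}(z)=z^2+15$, which satisfies $f^n(t)=\tilde{f}^n(3t)/3$; the projective height of $P_n$ equals that of $3P_n=\tilde{f}^n(3t)-3t^{2^n}$. Corollary~\ref{cor:height of polynomials}(a)(iv) applied to $\tilde{f}$ with $\tilde{a}=3t\in\Z[t]$ gives $h_{\pol}(\tilde{f}^n(3t))\leq c_{13}\cdot 2^n$; the trivial bound $h_{\pol}(3t^{2^n})=0$ together with subadditivity then yields $h_{\pol}(P_n)\leq C\cdot 2^n$. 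Invoking Corollary~\ref{cor:roots} with $d=d'=2^n$ finally gives $h(t_0)\leq \log 2+C$, independent of $n$, so $S_\cM$ has bounded height.

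The only genuinely non-routine ingredient is the Eisenstein verification; both the reduction via Theorem~\ref{thm:multiplicative dependence} and the polynomial height estimate from Corollary~\ref{cor:height of polynomials} follow the pattern of \S\ref{strategy} almost verbatim. The decisive numerical coincidence is that $5$ is a prime of good reduction at which $f$ degenerates to the power map $3z^2$, matching $g=z^2$ up to a unit, so the mod-$5$ reduction of $P_n$ is essentially a pure power of $t$ for every $n$; simultaneously $v_5(f(0))=1$, which supplies the Eisenstein condition on the constant term uniformly in $n$.
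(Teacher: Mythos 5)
Your proof is correct and takes essentially the same route as the paper's: reduce to $S_{\cM}$ via Theorem~\ref{thm:multiplicative dependence} using $\hhat_f(t)=\hhat_g(t)=1$, verify that $P_n(t)=f^n(t)-t^{2^n}$ is Eisenstein at $5$ (unit leading coefficient $3^{2^n-1}-1$, all other coefficients divisible by $5$, constant term of $5$-adic valuation exactly $1$), and conclude with Corollaries~\ref{cor:roots} and~\ref{cor:height of polynomials}. Your additional step of conjugating $f$ to the monic model $z^2+15$ before invoking Corollary~\ref{cor:height of polynomials} is a small extra precaution (the paper cites that corollary directly even though $f$ is not monic) but does not alter the argument.
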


\begin{proof}
First we notice that the canonical heights $\hhat_f(a)$ and $\hhat_g(b)$ are both equal to $1$. So, by Theorem~\ref{thm:multiplicative dependence}, it suffices to prove that the set 
$$S_{\cM}=\{t_0\in\Qbar:\ \text{$f^n(t_0)=g^n(t_0)$ for some $n\in\N$}\}$$ 
has bounded height. Now, for every $n\in\N$, the leading coefficient of $P_n(t):=f^n(a)-g^n(b)\in \Q[t]$
is $3^{2^n-1}-1$ which is not divisible by $5$, while the constant term is congruent to $5$ modulo $25$, and the coefficients of the remaining terms are divisible by $5$. By Eisenstein's criterion,
$P_n$ is irreducible over $\Q$. 
By Corollary~\ref{cor:roots} and 
Corollary~\ref{cor:height of polynomials}, this proves that $S_{\cM}$ has bounded height thanks to Corollary~\ref{cor:height of polynomials} (exactly as in the discussion of \S\ref{strategy}).  By Theorem~\ref{thm:multiplicative dependence} we conclude that $S$ has bounded height as well.
\end{proof}


\medskip
\section{Non-archimedean B\"ottcher coordinates}
\label{sec:Bottcher}

In this section, we introduce $p$-adic B\"ottcher coordinates near infinity for a polynomial.  We use this analysis in our proof of Theorem \ref{thm:z^2+t} and its generalization in Section \ref{sec:proof}.  Compare the usual definition over the complex numbers in, e.g., \cite[Chapter~9]{Mil06}.  See also \cite{Ing13} in the non-archimedean setting.

Fix $d\geq 2$, let 
$P(z)=z^d+a_1z^{d-1}+\ldots+a_{d-1}z+a_d$,  
and write 
$$P^n(z)=\sum_{i=0}^{d^n}A_{n,i}z^{d^n-i}=z^{d^n}\left( 1+\sum_{i=1}^{d^n}\frac{A_{n,i}}{z^i}\right)$$
as in Section~\ref{sec:upper bound}. Let $\cP=\Q[a_1,\ldots,a_d]$
be the ring of polynomials in the $a_i$'s with rational coefficients and let $\cR=\cP ((1/z))$ be the ring of Laurent
series in $1/z$ with coefficients in $\cP$. 
Define 
$\nu$ on $\cR\setminus\{0\}$ by
letting $\nu(F)$ be the 
lowest power of $1/z$ that appears in $F$ (for example
$\nu(z+\frac{1}{z^5})=-1$). The subring $\cR_0$ of $\cR$ containing all power series $F$ such that $\nu(F)\ge 0$ is precisely $\cP[[1/z]]$. We have that $\cR_0$ is a complete
topological ring in which a basis of neighborhoods of $0$ is:
$$\frac{1}{z}\cR_0 \supset \frac{1}{z^2}\cR_0\supset \frac{1}{z^3}\cR_0\supset\ldots$$

If $\displaystyle\alpha\in \frac{1}{z}\cR_0$ 
and $m\in\N$, the series:
$$(1+\alpha)^{1/m}:=1+\frac{1}{m}\alpha+\frac{1}{2!}\frac{1}{m}\left(\frac{1}{m}-1\right)\alpha^2+\ldots$$
is a well-defined element in $\cR_0$ and its $m$-th power
is $1+\alpha$. For $n\in\N$, we define the series:
\begin{align}\label{eq:F_n}
\begin{split}
F_n&=z\left(\frac{P^n(z)}{z^{d^n}}\right)^{1/d^n}=z\left(1+\sum_{i=1}^{d^n}\frac{A_{n,i}}{z^i}\right)^{1/d^n}=z\left(1+\frac{1}{d^n}\left(\sum_{i=1}^{d^n}\frac{A_{n,i}}{z^i}\right)+\ldots\right)\\
&=:z+\sum_{j=0}^{\infty}\frac{B_{n,j}}{z^j}
\end{split}
\end{align}
where $B_{n,j}\in \cP$ for every $j\geq 0$. We
now compare $F_{n+1}$ with $F_n$. We have:
\begin{align*}
 P^{n+1}(z)&=(P^n(z))^d+a_1(P^n(z))^{d-1}+\ldots+a_{d-1}P^n(z)
+a_d\\
 &= \left(z^{d^n}\left(\frac{P^n(z)}{z^{d^n}}\right)\right)^d + a_1\left(z^{d^n}\left(\frac{P^n(z)}{z^{d^n}}\right)\right)^{d-1}+\ldots+a_d\\
 &= z^{d^{n+1}}\left(\left(\frac{P^n(z)}{z^{d^n}}\right)^d +\frac{a_1}{z^{d^n}}\left(\frac{P^n(z)}{z^{d^n}}\right)^{d-1}+\frac{a_2}{z^{2d^n}}\left(\frac{P^n(z)}{z^{d^n}}\right)^{d-2}
 +\ldots+\frac{a_d}{z^{d^{n+1}}}\right) 
\end{align*}

so that
\begin{align*}
F_{n+1}(z)&=z\left(\left(\frac{P^n(z)}{z^{d^n}}\right)^d +\frac{a_1}{z^{d^n}}\left(\frac{P^n(z)}{z^{d^n}}\right)^{d-1}+\frac{a_2}{z^{2d^n}}\left(\frac{P^n(z)}{z^{d^n}}\right)^{d-2}
 +\ldots+\frac{a_d}{z^{d^{n+1}}}\right)^{1/d^{n+1}}\\
 &=z\left(\left(\frac{P^n(z)}{z^{d^n}}\right)^d + E_n\right)^{1/d^{n+1}}
\end{align*}
where $E_n\in \cR_0$ with $\nu(E_n)=d^n$. 
Put $\alpha=\displaystyle\left(\left(\frac{P^n(z)}{z^{d^n}}\right)^d + E_n\right)^{1/d^{n+1}}$ and 
$\beta=\displaystyle \left(\frac{P^n(z)}{z^{d^n}}\right)^{1/d^n}$,
we have $\nu(\alpha-\zeta\beta)=0$ for every $d^{n+1}$-th root
of unity $\zeta\neq 1$. Therefore
\begin{align*}
\nu\left(\left(\left(\frac{P^n(z)}{z^{d^n}}\right)^d + E_n\right)^{1/d^{n+1}}-\left(\frac{P^n(z)}{z^{d^n}}\right)^{1/d^n}\right)&=\nu(\alpha-\beta)=\nu(\alpha^{d^{n+1}}-\beta^{d^{n+1}})\\
&=\nu(E_n)=d^n.
\end{align*}
Therefore $F_{n+1}-F_n\in \displaystyle\frac{1}{z^{d^n-1}}\cR_0$. Hence the sequence $\{F_n\}_n$ converges in $\cR$ to 
a series:
$$\cB(z)=z+\sum_{j=0}^{\infty}\frac{B_j}{z^j}$$
where $B_j\in \cP$ for every $j\geq 0$. Since $F_{n+1}-F_n\in\displaystyle\frac{1}{z^{d^n-1}}\cR_0$, we have:
\begin{equation}\label{eq:compare B_j and B_nj}
B_j=B_{n,j}\ \text{if $j<d^n-1$.}
\end{equation}

For every monic polynomial $Q(z)\in \cP[z]\setminus \cP$ (i.e. $\deg(Q)\geq 1$), we have that 
$1/Q(z)$ belongs to $\cR_0$ and 
$\nu(1/Q(z))=\deg(Q)$. Therefore, for every 
series $F(z)=\displaystyle\sum_{i=-m}^{\infty}c_i/z^i\in \cR$,
the element
$F\circ Q(z)= F(Q(z)):=\displaystyle\sum_{i=-m}^{\infty} c_i/Q(z)^i$
is a well-defined element of $\cR$. From \eqref{eq:F_n},
we have:
$$F_n(P(z))=F_{n+1}(z)^d.$$
Together with the definition of $\cB$, we have:
\begin{equation}\label{eq:Bottcher functiontal equation}
\cB(P(z))=\cB(z)^d.
\end{equation}

For each $w\in M_\Q^0$, let $\C_w$ denote the completion
of $\Qbar_w$ and we use the same notation $\vert \cdot\vert_w$ to 
denote its extension on 
$\C_w$. Our goal is to provide a domain
$\cD\subset \C_w^{d+1}$
such that the series 
$\cB$ is convergent at every
$(z,a_1,\ldots,a_d)\in \cD$.

We need the following:
\begin{lemma}\label{lem:1-im and k!}
Let $k,m\in\N$ and let $p$ be a prime not dividing $m$. Then
$\displaystyle \frac{\prod_{i=0}^{k-1} (1-im)}{k!}$
is a $p$-adic integer.
\end{lemma}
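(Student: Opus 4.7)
The plan is to establish the $p$-adic valuation inequality $v_p\bigl(\prod_{i=0}^{k-1}(1-im)\bigr) \geq v_p(k!)$ directly, by combining Legendre's formula $v_p(k!) = \sum_{j\geq 1}\lfloor k/p^j\rfloor$ with a simple counting argument on residues.

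First I would rewrite the valuation of the numerator by swapping the order of summation after expanding each $v_p(1-im)$ as a sum of indicator functions:
\begin{equation*}
v_p\!\left(\prod_{i=0}^{k-1}(1-im)\right) \;=\; \sum_{i=0}^{k-1} v_p(1-im) \;=\; \sum_{j\geq 1}\#\bigl\{\,i \in \{0,\ldots,k-1\} : p^j \mid 1 - im\,\bigr\}.
\end{equation*}
The key observation is that since $p\nmid m$, the element $m$ is a unit in $\Z/p^j\Z$, so the congruence $im\equiv 1\pmod{p^j}$ has a unique solution $i_0 \in \{0,1,\ldots,p^j-1\}$. Hence the indices $i$ contributing to the $j$-th term form an arithmetic progression with common difference $p^j$, and any window of $k$ consecutive integers contains at least $\lfloor k/p^j\rfloor$ of them.

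Summing over $j$ then yields
\begin{equation*}
v_p\!\left(\prod_{i=0}^{k-1}(1-im)\right) \;\geq\; \sum_{j\geq 1}\lfloor k/p^j\rfloor \;=\; v_p(k!),
\end{equation*}
which is the claim. I do not anticipate a serious obstacle: the hypothesis $p\nmid m$ is used solely to ensure invertibility of $m$ modulo each $p^j$, which is precisely what makes the residue count work. An essentially equivalent alternative would be to rewrite $\prod_{i=0}^{k-1}(1-im)/k! = m^{k}\binom{1/m}{k}$ and invoke the standard fact that the generalized binomial coefficient $\binom{x}{k}$ takes values in $\Z_p$ for $x\in\Z_p$ (noting $1/m\in\Z_p$ since $p\nmid m$); I prefer the direct route above because it is self-contained and avoids appealing to Mahler-type continuity statements.
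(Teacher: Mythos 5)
Your proof is correct, and it takes a genuinely different route from the one the paper writes out. You establish the valuation inequality $v_p\bigl(\prod_{i=0}^{k-1}(1-im)\bigr)\geq v_p(k!)$ directly: you expand each $v_p(1-im)$ as $\sum_{j\geq 1}\mathbf{1}[p^j\mid 1-im]$, swap the order of summation, and observe that since $m$ is a unit modulo $p^j$ the congruence $im\equiv 1\pmod{p^j}$ has a unique residue class of solutions, so $\{0,\dots,k-1\}$ contains at least $\lfloor k/p^j\rfloor$ of them; Legendre's formula then closes the argument. The paper instead uses the observation (attributed to David Masser) that the generalized binomial coefficient $B_k(x)=\frac{x(x-1)\cdots(x-k+1)}{k!}$ maps $\Z$ into $\Z$, hence by density maps $\Z_p$ into $\Z_p$, so that $m^k B_k(1/m)=\prod_{i=0}^{k-1}(1-im)/k!\in\Z_p$; the paper even remarks in passing that one could alternatively count prime exponents directly, which is precisely what you do. Your approach is more elementary and self-contained, requiring only Legendre's formula and a residue count; the paper's is shorter and exploits a standard structural fact about integer-valued polynomials. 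You correctly identify the second route as equivalent at the end of your proposal, so you effectively have both proofs in hand.
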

\begin{proof}
%
One can obtain this result by simply counting the exponent of the prime $p$ in both the numerator and the denominator of the above fraction, but one can also use the following clever observation suggested by David Masser. The binomial coefficient $B_k(x)$ given by 
$$x\mapsto \frac{x\cdot (x-1)\cdots (x-k+1)}{k!}$$
sends $\mathbb{Z}$ into itself and so, since $\mathbb{Z}$ is dense in $\mathbb{Z}_p$, then it also sends $\mathbb{Z}_p$ into itself. Since $p\nmid m$, then $\frac{1}{m}\in\mathbb{Z}_p$ and so, $B_k(1/m)\in\mathbb{Z}_p$; in particular, $m^kB_k(1/m)$ is a $p$-adic integer, as desired.
\end{proof}

\begin{theorem}\label{thm:Bottcher}
Let $w\in M_\Q^0$.
\begin{itemize}
\item If $w\in M_\Q^0$ corresponds to a prime which does not divide $d$, let
$$\cD:=\left\{(z,a_1,\ldots,a_d)\in \C_w^{d+1}:\ 
\max\{1,\vert a_1\vert_w,\ldots,\vert a_d\vert_w\}<\vert z\vert_w\right\}.$$

\item If $w\in M_\Q^0$ corresponds to a prime $p\mid d$, let
$$\cD:=\left\{(z,a_1,\ldots,a_d)\in \C_w^{d+1}:\ 
\frac{\max\{1,\vert a_1\vert_w,\ldots,\vert a_d\vert_w\}}{\vert d\vert_w}p^{1/(p-1)}<\vert z\vert_w\right\}.$$
\end{itemize}
Then the following hold:
\begin{itemize}
	\item [(a)] For every $(z,a_1,\ldots,a_d)\in \cD$, the
	series
	$$z+\sum_{j=1}^{\infty} \frac{B_j(a_1,\ldots,a_d)}{z^j}$$
	is convergent. This defines a function 
	$\tilde{\cB}:\ \cD\rightarrow \C_w$. Moreover, if
	$z,a_1,\ldots,a_d$ belong to a finite extension
	$\kappa$ of $\Q_w$ then $\tilde{\cB}(z,a_1,\ldots,a_d)\in\kappa$.
	\item [(b)] For every $(z,a_1,\ldots,a_d)\in \cD$:
	$$\tilde{\cB}(z^d+a_1z^{d-1}+\ldots+a_d,a_1,\ldots,a_d)=\tilde{\cB}(z,a_1,\ldots,a_d)^d.$$
	\item [(c)] If $\tilde{\cB}(z,a_1,\ldots,a_d)=\tilde{\cB}(z',a_1,\ldots,a_d)$ then $z=z'$.
\end{itemize}
\end{theorem}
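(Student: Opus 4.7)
For part (a), the plan is to bound the $w$-adic size of each coefficient $B_j(a_1,\ldots,a_d)$ and deduce geometric convergence of the series on $\mathcal{D}$. Since $B_j = B_{n,j}$ for every $n$ with $d^n > j+1$ by the identity \eqref{eq:compare B_j and B_nj}, one may choose $n=j+1$ (noting $2^{j+1}>j+1$ for all $j\geq 0$). Starting from the binomial expansion
\begin{equation*}
F_n - z \;=\; z\sum_{k\geq 1}\binom{1/d^n}{k}\alpha_n^k,\qquad \alpha_n = \sum_{i=1}^{d^n}A_{n,i}\,z^{-i},
\end{equation*}
the coefficient $B_{n,j}$ is a sum over tuples $(i_1,\ldots,i_k)\in\{1,\ldots,d^n\}^k$ with $i_1+\cdots+i_k=j+1$ of products $\binom{1/d^n}{k}A_{n,i_1}\cdots A_{n,i_k}$. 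With $n=j+1$, every $i_\ell\leq n$, so Propositions~\ref{prop:deg Ani} and \ref{prop:Ani at prime p} give $|A_{n,i}(a_1,\ldots,a_d)|_w\leq |d|_w^{\,n-i}\,U^i$, where $U=\max\{1,|a_1|_w,\ldots,|a_d|_w\}$. For the binomial coefficient: Lemma~\ref{lem:1-im and k!} yields $|\binom{1/d^n}{k}|_w\leq 1$ when $p\nmid d$; when $p\mid d$, the numerator $\prod_{i=0}^{k-1}(1-id^n)$ has $w$-adic size $1$ (each factor being $\equiv 1\bmod p$) and Legendre's formula gives $v_p(k!)\leq k/(p-1)$, so $|\binom{1/d^n}{k}|_w\leq\bigl(p^{1/(p-1)}/|d|_w^{\,n}\bigr)^k$. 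Multiplying and using $\sum i_\ell = j+1$, the $|d|_w$-powers telescope, and taking the maximum over $k\leq j+1$ gives
\begin{equation*}
|B_j(a_1,\ldots,a_d)|_w \;\leq\; C^{\,j+1}\quad\text{where}\quad C=\begin{cases} U & \text{if } p\nmid d,\\[2pt] p^{1/(p-1)}U/|d|_w & \text{if } p\mid d.\end{cases}
\end{equation*}
Since $\mathcal{D}$ is defined precisely by $|z|_w>C$, the series $z+\sum_{j\geq 0}B_j(a)/z^j$ converges geometrically on $\mathcal{D}$, and the value lies in $\kappa$ because each partial sum does and $\kappa$ is complete.

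For part (b), I will first check that $\mathcal{D}$ is forward invariant under $(z,a_1,\ldots,a_d)\mapsto(P(z),a_1,\ldots,a_d)$: when $|z|_w>C\geq 1$, the leading term of $P(z)$ dominates ultrametrically, so $|P(z)|_w=|z|_w^{\,d}>C$. The formal functional equation \eqref{eq:Bottcher functiontal equation}, namely $\mathcal{B}(P(z))=\mathcal{B}(z)^d$ in $\mathcal{R}$, then translates into the analytic identity for $\tilde{\mathcal{B}}$: the partial sums of both sides are polynomial expressions in $1/z$ and $a_1,\ldots,a_d$ that agree formally and hence numerically once the series converge at the substituted point.

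For part (c), suppose $\tilde{\mathcal{B}}(z,a_1,\ldots,a_d)=\tilde{\mathcal{B}}(z',a_1,\ldots,a_d)$ with $z,z'\in\mathcal{D}$ and the same $a_1,\ldots,a_d$. The constant terms $B_0$ cancel, and the identity $z^{-j}-z'^{-j} = -(z-z')(z^{j-1}+z^{j-2}z'+\cdots+z'^{j-1})/(zz')^j$ gives
\begin{equation*}
0 \;=\; (z-z')\Bigl(1-\sum_{j\geq 1}B_j(a_1,\ldots,a_d)\cdot\frac{z^{j-1}+z^{j-2}z'+\cdots+z'^{j-1}}{z^j z'^j}\Bigr).
\end{equation*}
Using the bound $|B_j|_w\leq C^{\,j+1}$ from (a) together with the ultrametric estimate $|z^{j-1}+\cdots+z'^{j-1}|_w\leq\max(|z|_w,|z'|_w)^{j-1}$, the generic $j$-th term has $w$-adic size at most $(C/\min(|z|_w,|z'|_w))^{j+1}<1$. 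By the ultrametric inequality the inner sum has $w$-adic size strictly less than $1$, so the bracketed factor has size $1$ and is nonzero, forcing $z=z'$.

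The main obstacle is the case $p\mid d$ of (a): the binomial coefficients $\binom{1/d^n}{k}$ have $w$-adic size growing like $|d|_w^{-nk}$, and the proof succeeds only by exploiting the precise decay $|A_{n,i}|_w\leq|d|_w^{\,n-i}$ from Proposition~\ref{prop:Ani at prime p} to cancel this growth. The resulting threshold $p^{1/(p-1)}U/|d|_w$ in the definition of $\mathcal{D}$ reflects the $p$-adic radius of convergence of the exponential, and is precisely the extra flexibility over Ingram's treatment \cite{Ing13} that is needed when $p$ divides the degree.
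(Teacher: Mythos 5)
Your proposal is correct and follows essentially the same route as the paper: bound $|B_j|_w$ via the binomial expansion of $F_n$ using Propositions~\ref{prop:deg Ani} and \ref{prop:Ani at prime p} together with Lemma~\ref{lem:1-im and k!} (and Legendre's formula when $p\mid d$), then deduce geometric convergence and injectivity by the ultrametric inequality. The only cosmetic differences are your choice $n=j+1$ instead of $\lceil\log_d(j+2)\rceil$ (which sidesteps the case $i_\ell>n$ that the paper handles implicitly) and factoring $z-z'$ out at the start of the injectivity argument rather than estimating the two sides of the identity separately.
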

\begin{proof}
Part (b) follows from \eqref{eq:Bottcher functiontal equation}. 
We will prove parts (a) and (c) for the case $w\nmid d$ first.

Let $j\in \N_0$ and choose $n:=n(j):=\lceil \log_d(j+2)\rceil$
so that $j<d^n-1$ and $B_j=B_{n,j}$ by \eqref{eq:compare B_j and B_nj}. By \eqref{eq:F_n}, 
$B_{n,j}$ is the coefficient of $1/z^{j+1}$ in
$$\frac{1}{d^n}\left(\sum_{i=1}^{d^n}\frac{A_{n,i}}{z^i}\right)
+\frac{1}{2!}\frac{1}{d^n}\left(\frac{1}{d^n}-1\right)\left(\sum_{i=1}^{d^n}\frac{A_{n,i}}{z^i}\right)^2+\ldots$$

For each $k\in\N$ with $k\leq j+1$, let $c_{n,j,k}$ be the coefficient of
$1/z^{j+1}$
in
$$\frac{1}{k!}\frac{1}{d^n}\ldots \left(\frac{1}{d^n}-k+1\right)\left(\sum_{i=1}^{d^n}\frac{A_{n,i}}{z^i}\right)^k.
$$
We have:
\begin{equation}\label{eq:cnjk}
c_{n,j,k}=\frac{1}{k!}\frac{1}{d^n}\ldots \left(\frac{1}{d^n}-k+1\right)\sum_{(i_1,\ldots,i_k)} A_{n,i_1}\ldots A_{n,i_k}
\end{equation} 
where $\sum$ is taken over the tuples $(i_1,\ldots,i_k)$
in $\{1,\ldots,d^n\}$ such that
$i_1+\ldots+i_k=j+1$.
Let $(z,a_1,\ldots,a_d)\in \cD$. Write $M=\max\{1,\vert a_1\vert_w,\ldots,\vert a_d\vert_w\}$.

If $w\in M_\Q^0$ with
$w\nmid d$, from Proposition~\ref{prop:deg Ani}, Proposition~\ref{prop:Ani at prime p}, and Lemma~\ref{lem:1-im and k!}, we have:
\begin{equation}\label{eq:w nmid d first}
\vert B_{n,j}\vert_w=\left\vert\displaystyle\sum_{k=1}^{j+1} c_{n,j,k}\right\vert_w\leq M^{j+1},
\end{equation}
\begin{equation}\label{eq:w nmid d second}
\text{and hence }\left\vert\frac{B_{n,j}}{z^j}\right\vert_w\leq M\left(\frac{M}{\vert z\vert_w}\right)^{j}.
\end{equation}

Therefore for every $(z,a_1,\ldots,a_d)\in\cD$, the series
$$z+\sum_{j=0}^{\infty}\frac{B_j(a_1,\ldots,a_d)}{z^j}$$
is convergent. The last assertion in part (a) follows from the
completeness of $\kappa$. For part (c),
we have:
\begin{equation}\label{eq:c non-arch w 1}
0=\tilde{\cB}(z,a_1,\ldots,a_d)-\tilde{\cB}(z',a_1,\ldots,a_d)
= (z-z')+\sum_{i=1}^{\infty}\frac{B_i(a_1,\ldots,a_d)(z'^i-z^i)}{z^iz'^i}.
\end{equation}
Assume $z\neq z'$ and we arrive at a contradiction as follows. Without loss of generality, assume $\vert z'\vert_w\geq \vert z\vert_w$. Let $i\in\N$, we have:
\begin{equation}\label{eq:c non-arch w 2}
\frac{\vert z'^i-z^i\vert_w}{\vert z'-z\vert_w} \leq \vert z'\vert_w^{i-1}
\end{equation}
Equation \eqref{eq:w nmid d first} yields that 
\begin{equation}\label{eq:c non-arch w 3}
\left\vert\frac{B_i(a_1,\ldots,a_d)}{z^iz'}\right\vert_w
\leq \left(\frac{M}{\vert z\vert_w}\right)^{i+1}
\leq \left(\frac{M}{\vert z\vert_w}\right)^{2}
\end{equation}

Equation \eqref{eq:c non-arch w 2} and \eqref{eq:c non-arch w 3}
give:
$$\left|\sum_{i=1}^{\infty}\frac{B_i(a_1,\ldots,a_d)(z'^i-z^i)}{z^iz'^i}
\right|_w\leq \left(\frac{M}{\vert z\vert_w}\right)^{2} \vert z-z'\vert_w < \vert z-z'\vert_w$$
contradicting \eqref{eq:c non-arch w 1}. This finishes the proof 
for the case $w\in M_\Q^0$.

If $w\in M_\Q^0$ corresponds to a prime $p\mid d$, 
then we first note that the exponent of $p$ in $k!$ is
$$\lfloor k/p \rfloor + \lfloor k/p^2\rfloor +\ldots
\leq \frac{k}{p-1}.$$
From Proposition~\ref{prop:deg Ani} and Proposition~\ref{prop:Ani at prime p}, we have:
\begin{align}\label{eq:w mid d first}
\begin{split}
\vert B_{n,j}\vert_w\leq \max_{1\leq k\leq j+1} \vert c_{n,j,k}\vert_w&\leq \max_{1\leq k\leq j+1} \vert d\vert_p^{-nk}\vert d\vert_p^{nk-j-1}p^{k/(p-1)} M^{j+1}\\
&=(Mp^{1/(p-1)}/\vert d\vert_w)^{j+1},
\end{split}
\end{align}
\begin{equation}\label{eq:w mid d second}
\text{and hence } \left\vert\frac{B_{n,j}}{z^j}\right\vert_w \leq 
(Mp^{1/(p-1)}/\vert d\vert_w)\left(\frac{Mp^{1/(p-1)}}{\vert d\vert_w \vert z\vert_w}\right)^{j}.
\end{equation}
We finish the proof using similar arguments as in the case $w\nmid d$
in which equations \eqref{eq:w mid d first} and \eqref{eq:w mid d second} play the role of equations \eqref{eq:w nmid d first}
and \eqref{eq:w nmid d second}.  
\end{proof}


\bigskip
\section{Bounded height in families}
\label{sec:proof}

In this section we complete the proof of Theorem \ref{thm:z^2+t}.  In fact, we prove the more general result of Theorem \ref{thm:general}, relying on the results of Section \ref{sec:Bottcher}.  Throughout this section, let $\cF=\Qbar(t)$. 

\begin{theorem}\label{thm:general}
Let $d\geq 2$, let $f(z)=g(z)=z^d+A_1(t)z^{d-1}+\ldots+A_d(t)\in \Qbar[t][z]$, and let $a,b\in \Qbar$. Assume the following:
\begin{itemize}
	\item [(A)] $d$ is a prime power.	
	\item [(B)] There is a prime $p$ and an embedding
	$\Qbar\rightarrow \C_p$ satisfying the following conditions:
	\begin{itemize}
		\item [(i)] Let $\Zbar_p$ denote the set of elements of 
		$\Qbar_p$ that are integral over $\Z_p$. 
		For every $i$, $A_i(t)\in \Zbar_p[t]$ (in other words,
		$\vert A_i\vert_p\leq 1$) and $\deg(A_i)<i$.
		\item [(ii)] $a\in \Zbar_p$ while $b\notin \Zbar_p$.
		\item [(iii)] For some $m\in\N$, the polynomial
		$f^{m}(a)\in\Zbar_p[t]$ is non-constant and its leading coefficient is a unit. 
	\end{itemize}
\end{itemize} 
Then the pairs $(f,a)$ and $(g,b)$ satisfy the conclusion of Conjecture~\ref{conj:diagonal}.
\end{theorem}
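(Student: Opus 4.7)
My plan is to follow the outline of \S\ref{strategy}: reduce via Theorem~\ref{thm:multiplicative dependence} to bounding the height on $S_\cM$, and then, for each $t_0\in S_\cM$, exhibit a polynomial $P\in K[t]$ (with $K$ a number field containing $a$, $b$, and the coefficients of the $A_i$'s, embedded in $\C_p$ via the embedding of~(B)) that vanishes at $t_0$, and prove $[K(t_0):K]$ grows comparably with $\deg P$; Corollary~\ref{cor:height of polynomials} together with Corollary~\ref{cor:roots} then forces $h(t_0)$ to be bounded. Since $d_1=d_2=d$, Theorem~\ref{thm:multiplicative dependence} reduces matters to $S_\cM$ up to the excluded possibilities of isotriviality or a collision $f^m(a)=f^n(b)$. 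Condition~(B)(iii) rules out $a$ being $f$-preperiodic (the sequence $\{f^m(a)\}_{m\geq 0}$ has unbounded degree in $t$), so by Proposition~\ref{prop:preperiodic case} I may further assume $b$ is not $f$-preperiodic; then $\hhat_f(a),\hhat_f(b)>0$ by Proposition~\ref{prop:height function fields}(d), and if $\hhat_f(b)/\hhat_f(a)\notin\{d^e:e\in\Z\}$ then $\cM=\emptyset$ and we are done. Otherwise $\cM=\{(n+e,n):n\geq n_1\}$ for a fixed integer $e$.

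For each $t_0\in S_\cM$, pick $(m^*,n^*)\in\cM$ with $n^*$ minimal such that $f^{m^*}(a)(t_0)=f^{n^*}(b)(t_0)$, so $t_0$ is a root of $P:=f^{m^*}(a)-f^{n^*}(b)\in K[t]$; by Corollary~\ref{cor:height of polynomials}, $\deg P$ and $h_\pol(P)$ are both $O(d^{n^*})$. The heart of the proof is to establish $[K(t_0):K]\geq c\,d^{n^*}$ for an absolute constant $c>0$. Fix $\sigma:K(t_0)\hookrightarrow\C_p$ extending the embedding of~(B), and set $\tau:=\sigma(t_0)$. An ultrametric computation using $a\in\Zbar_p$, $A_i\in\Zbar_p[t]$, and $|b|_p>1$ forces $|\tau|_p>1$: otherwise $|f^{m^*}(a)(\tau)|_p\leq 1$ while $|f^{n^*}(b)(\tau)|_p=|b|_p^{d^{n^*}}>1$, contradicting the equality. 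Given $|\tau|_p>1$, condition~(B)(iii) and the bound $|A_i(\tau)|_p\leq|\tau|_p^{i-1}$ place $f^{m_0}(a)(\tau)$ in the B\"ottcher domain $\cD_\tau$ of Theorem~\ref{thm:Bottcher} (the $p$-adic unit leading coefficient yields $|f^{m_0}(a)(\tau)|_p=|\tau|_p^{\deg f^{m_0}(a)}$, which dominates the threshold $\sim\max(1,|\tau|_p^{d-1})$ for $m_0$ large enough independent of $\tau$). Running the functional equation of Theorem~\ref{thm:Bottcher}(b) \emph{backwards} from $f^{n^*}(b)(\tau)=f^{m^*}(a)(\tau)\in\cD_\tau$ shows $|f^{n^*-k}(b)(\tau)|_p=|\tau|_p^{\deg f^{m^*}(a)/d^k}$, which remains above the threshold so long as $k\leq n^*-n_0$ where $n_0:=m_0-e+O(1)$ is uniform in $\tau$. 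Applying Theorem~\ref{thm:Bottcher}(b) then yields
\[
A^{d^{m^*-m_0}}\;=\;B^{d^{n^*-n_0}},\qquad A:=\tilde\cB_\tau\!\left(f^{m_0}(a)(\tau)\right),\ \ B:=\tilde\cB_\tau\!\left(f^{n_0}(b)(\tau)\right),
\]
and $\eta:=A^{d^{(m^*-m_0)-(n^*-n_0)}}/B$ is a $d^{n^*-n_0}$-th root of unity lying in $K_v(\tau)$ (where $v$ is the place of $K$ above $p$). The minimality of $n^*$ combined with the injectivity part of Theorem~\ref{thm:Bottcher}(c) forces $\eta$ to be either a \emph{primitive} $d^{n^*-n_0}$-th root of unity or else a root of unity of order $d^k$ with $k<n_1-n_0$: if $\eta^{d^k}=1$ with $n_1-n_0\leq k<n^*-n_0$, pulling back through $\tilde\cB_\tau$ via injectivity produces $(e+n_0+k,\,n_0+k)\in\cM$ satisfying the equation at $t_0$, contradicting minimality; while if $k<n_1-n_0$ then $t_0$ satisfies $f^{e+n_0+k}(a)(t_0)=f^{n_0+k}(b)(t_0)$ with bounded indices, placing $t_0$ in one of finitely many root sets of nonzero polynomials (hence in a finite set). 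In the primitive case, since $d$ is a prime power, $[K_v(\eta):K_v]$ is a definite fraction of $d^{n^*-n_0}\sim d^{n^*}$, so $[K(t_0):K]\geq[K_v(\tau):K_v]\geq c\,d^{n^*}$, and Corollary~\ref{cor:roots} yields $h(t_0)=O(1)$.

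The main technical obstacle is the uniform bound $n_0=O(1)$: the orbit of $b$ under $f_\tau$ does not in general enter $\cD_\tau$ after a bounded number of forward iterations (for $|\tau|_p$ near $1$ the B\"ottcher threshold is a constant, while for $|\tau|_p$ large it scales like $|\tau|_p^{d-1}$), so one must work \emph{backwards} from the large value $|f^{n^*}(b)(\tau)|_p=|\tau|_p^{\deg f^{m^*}(a)}$ as sketched above; this is the one place where the precise form of the B\"ottcher domain in Theorem~\ref{thm:Bottcher} (including the factor $p^{1/(p-1)}/|d|_p$ when $p\mid d$) and the sharp magnitude bounds of \S\ref{sec:upper bound} are crucial. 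A secondary but delicate point is the case analysis for the order of $\eta$: one must juggle Theorem~\ref{thm:Bottcher}(c) and the minimality of $n^*$ to separate cleanly the primitive case (yielding the degree bound via cyclotomic theory) from the bounded-order case (yielding finite-set containment), and verify that pullback through $\tilde\cB_\tau$ lands inside $\cD_\tau$ at each intermediate iterate.
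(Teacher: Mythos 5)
Your route is the paper's route: reduce to $S_{\cM}$ via Theorem~\ref{thm:multiplicative dependence}, push iterates of $a$ and $b$ into the non-archimedean B\"ottcher domain of Theorem~\ref{thm:Bottcher}, extract from the collision a root of unity of order comparable to $d^{n^*}$ in $K_p(t_0)$, and close with Corollaries~\ref{cor:roots} and \ref{cor:height of polynomials}. The gap sits exactly at the point you yourself flag as the main obstacle, and your proposed fix does not close it. You only establish $\vert t_0\vert_p>1$, but when $p\mid d$ the domain $\cD$ has threshold $\max\{1,\vert A_1(t_0)\vert_p,\ldots,\vert A_d(t_0)\vert_p\}\,p^{1/(p-1)}/\vert d\vert_p$, and the constant $p^{1/(p-1)}/\vert d\vert_p>1$ cannot be beaten by $\vert t_0\vert_p^{G}$ for any fixed exponent gap $G$ if $\vert t_0\vert_p$ is allowed to approach $1$ (note $p\mid d$ is the essential case, e.g.\ Theorem~\ref{thm:z^2+t} with $p=d=2$). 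Working backwards from $f^{n^*}(b)(t_0)=f^{m^*}(a)(t_0)$ does not restore uniformity: the value at step $n^*-k$ is $\vert t_0\vert_p^{\deg f^{m^*}(a)/d^{k}}$, and it stays above the threshold only for $k\le n^*-n_0(t_0)$ with $n_0(t_0)$ growing like $\log_d\bigl(\mathrm{const}/\log\vert t_0\vert_p\bigr)$, which is unbounded as $\vert t_0\vert_p\to 1^{+}$. So your assertion that $n_0=m_0-e+O(1)$ is uniform in $\tau$ is unjustified (nothing you proved excludes $\vert t_0\vert_p$ arbitrarily close to $1$), and with non-uniform $n_0$ the root of unity only has order about $d^{\,n^*-n_0(t_0)}$, which no longer yields $[K(t_0):K]\ge c\,d^{n^*}$ and hence no height bound.

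The missing ingredient is the paper's sharper first step: not merely $\vert t_0\vert_p>1$ but $\vert t_0\vert_p\ge\vert b\vert_p$. This follows from the same ultrametric comparison you already use: if $\vert t_0\vert_p<\vert b\vert_p$ then $\vert f^{m^*}(a)(t_0)\vert_p\le\max\{1,\vert t_0\vert_p\}^{\deg f^{m^*}(a)}<\vert b\vert_p^{d^{n^*}}=\vert f^{n^*}(b)(t_0)\vert_p$, contradicting the collision (here $\deg f^{m^*}(a)<d^{n^*}$ comes from the height identity defining $\cM$ together with $\hhat_f(a)=\delta_1/d^{m_0}$, $\hhat_f(b)=\delta_2/d^{n_0}$ and $\delta_i<d$). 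Once $\vert t_0\vert_p$ is bounded below by the fixed number $\vert b\vert_p>1$, a single uniform constant $c_{18}$ with $\vert b\vert_p^{\delta d^{c_{18}-1}-d}>p^{1/(p-1)}/\vert d\vert_p$ puts both $f^{c_{18}}_{t_0}(a)$ and $f^{c_{18}}_{t_0}(b)$ in $\cD$; the paper also verifies (its step showing $n_1=1$) that the $b$-orbit already has $\vert f^{n}_{t_0}(b)\vert_p=\vert t_0\vert_p^{\delta d^{n-1}}$ for all $n\ge1$, the forward analogue of your backward computation. A minor secondary point: when $d$ is a proper prime power you cannot conclude that your $\eta$ is a \emph{primitive} $d^{n^*-n_0}$-th root of unity, only that its order fails to divide $d^{n^*-n_0-1}$; that weaker statement already gives the cyclotomic degree bound via \cite{Neu99}, as in the paper, so this part of your case analysis is repairable, while the uniformity issue above is a genuine gap.
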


\begin{proof}
Define the set $S$ as in Conjecture~\ref{conj:diagonal}. It
suffices to assume that:
\begin{itemize}
	\item [(C)] the pairs $(f,a)$ and $(f,b)$ are not isotrivial,
	and 
	\item [(D)] for every $m,n\in \N_0$, $f^m(a)\neq f^n(b)$
\end{itemize}
and prove that $S$ has bounded height. Define
$\cM$ and $S_{\cM}$ as in Theorem~\ref{thm:multiplicative dependence}, it suffices to prove that 
$S_\cM$ has bounded height. 
We may assume that $\cM\neq \emptyset$; otherwise there is nothing to prove. 
Let $m_0\in\N$ be minimal such
that the polynomial $f^{m_0}(a)\in\Qbar_p[t]$ 
is non-constant (see condition~B~(iii)); let $\delta_1>0$ denote its degree. From condition (iii) and the form of $f$, we have that $\deg(f^m(a))=d^{m-m_0}\delta_1$ and the leading coefficient of $f^m(a)$ is a unit for
every $m\geq m_0$. Therefore $\hhat_f(a)=\displaystyle\frac{\delta_1}{d^{m_0}}>0$. Since $\cM\neq \emptyset$, we
have $\hhat_f(b)>0$. Hence there is
minimal $n_0\in\N$ such that the polynomial
$f^{n_0}(b)\in\Qbar_p[t]$ is non-constant; let $\delta_2>0$
denote its degree. Then a similar analysis as above yields that $\hhat_f(b)=\frac{\delta_2}{d^{n_0}}$. By the minimality of $m_0$ and $n_0$, along with the form of the polynomial $f$ (see also condition~B~(i)), we have:
\begin{itemize}
\item $1\leq \delta_1,\delta_2<d$. And since $\cM\neq \emptyset$,
we have that $\delta_1/\delta_2$ is a power of $d$. This gives
$\delta_1=\delta_2=:\delta\in \{1,\ldots, d-1\}$.
\item $f^{m_0-1}(a)\in\Qbar\subset \Qbar_p$ and $\vert f^{m_0-1}(a)\vert_p\leq 1$.
\item $f^{n_0-1}(b)\in\Qbar\subset \Qbar_p$ and $\vert f^{n_0-1}(b)\vert_p> 1$ (note that $|b|_p>1$ while each $A_i\in\Zbar_p[t]$).
\end{itemize}
Therefore, after replacing $(a,b)$ by 
$(f^{m_0-1}(a),f^{n_0-1}(b))$, from now on, we assume
that $m_0=n_0=1$. We need to prove that the set
$$S_{\cM}=\{t\in \Qbar: f^n_t(a)=g^n_t(b)\ \text{for some $n\geq 0$}\}$$
has bounded height.

Let $K$ be a number field such that 
$a,b\in K$ and the polynomials $A_i$ belong to $K[t]$.
As in the discussion of \S\ref{strategy}, applying Corollaries \ref{cor:roots} and \ref{cor:height of polynomials}, it suffices to prove the following claim.

\medskip
\textbf{Claim:} There exists a positive constant $c_{19}$ depending on $K$, $p$, $f$, $a$, and $b$ such that the following holds. For every $t_0\in S_{\cM}$, let $N\in\N$ be minimal such that $f^N_{t_0}(a)=f^N_{t_0}(b)$; then we have $[K(t_0):K]\geq c_{19}d^N$. 

\medskip
We prove this claim as follows.  Fix a positive integer $c_{18}$ such that
$\vert b\vert_p^{\delta d^{c_{18}-1}-d}>p^{1/(p-1)}/\vert d\vert_p$.  Fix $t_0\in S_{\cM}$ and let $N$ be as in the claim; we may assume $N>c_{18}$.  First, we observe that 
$\vert t_0\vert_p\ge \vert b\vert_p$. Otherwise, we would have
$$\vert f^N_{t_0}(a)\vert_p \leq \max\{1,\vert t_0\vert_p\}^{\delta\cdot d^{N-1}}<|b|_p^{d^N},$$
while $\vert f^N_{t_0}(b)\vert_p=\vert b\vert_p^{d^N}$, contradiction.

Now since $\vert t_0\vert_p\geq \vert b\vert_p>1$ and 
$f^n(a)\in\Zbar_p[t]$ is a polynomial of
degree $\delta d^{n-1}$ whose leading coefficient is a unit, 
we have: 
\begin{equation}\label{eq:f^nat_0p}
\vert f^n_{t_0}(a)\vert_p=\vert t_0\vert_p^{\delta d^{n-1}}\ \text{for every $n\in\N$.} 
\end{equation}

On the other hand, let $n_1\geq 0$ be minimal such that
$\vert f^{n_1}_{t_0}(b)\vert_p \geq \vert t_0\vert_p$; note that $n_1$ exists since $\vert f^{N}_{t_0}(b)\vert_p=\vert f^N_{t_0}(a)\vert_p =\vert t_0\vert_p^{\delta d^{N-1}}\geq \vert t_0\vert_p$.
From condition (i), we have:
\begin{equation}\label{eq:f^nbt_0p}
\vert f^n_{t_0}(b)\vert_p=\vert f^{n_1}_{t_0}(b)\vert_p^{d^{n-n_1}}\ \text{for every $n\geq n_1$.}
\end{equation}

From \eqref{eq:f^nat_0p}, \eqref{eq:f^nbt_0p},
and $f^N_{t_0}(a)=f^N_{t_0}(b)$, we have:
\begin{equation}\label{eq:compare}
\vert t_0\vert_p^{\delta d^{n_1-1}}=\vert f^{n_1}_{t_0}(b)\vert_p.
\end{equation}
If $n_1=0$, equation 
\eqref{eq:compare} would give: 
$$|b|_p=\vert f^{0}_{t_0}(b)\vert_p=|t_0|_p^{\delta/d}<\vert t_0\vert_p,$$  contradicting the earlier observation that
$\vert t_0\vert_p \geq \vert b\vert_p$. Hence
$n_1\geq 1$. We show next that $n_1=1$. Indeed, if $n_1\geq 2$ then
by using $\vert f^{n_1-1}_{t_0}(b)\vert_p <\vert t_0\vert_p$ 
due to the minimality of $n_1$
and by induction, we get that $$\vert f^n_{t_0}(b)\vert_p <\vert t_0\vert_p^{d^{n-n_1+1}}\leq 
\vert t_0\vert_p^{d^{n-1}}$$
for every $n\geq n_1-1$. In particular, when $n=N$, we have:
$$\vert f^N_{t_0}(a)\vert_p=\vert f^N_{t_0}(b)\vert_p<\vert t_0\vert_p^{d^{N-1}}$$
contradicting \eqref{eq:f^nat_0p}. Therefore $n_1=1$.

Let 
	$$\cD':=\left\{(z,t)\in \C_p^2:\ \displaystyle\frac{\max\{1,\vert A_1(t)\vert_p,\ldots,
\vert A_d(t)\vert_p\}}{\vert d\vert_p}p^{1/(p-1)}<\vert z\vert_p\right\}.$$
From part (i) of condition~(B), we have $\vert A_i(t_0)\vert_p<\vert t_0\vert_p^d$. From the fact that $n_1=1$ coupled with equations \eqref{eq:f^nat_0p}, \eqref{eq:f^nbt_0p}, \eqref{eq:compare}, along with the choice of $c_{18}$, 
and 
the inequality
$\vert t_0\vert_p\geq \vert b\vert_p$, we have:
$$\vert f^{c_{18}}_{t_0}(a)\vert_p=\vert t_0\vert_p^{\delta d^{c_{18}-1}}>\vert t_0\vert_p^d p^{1/(p-1)}/\vert d\vert_p$$
and
$$\vert f^{c_{18}}_{t_0}(b)\vert_p=\vert t_0\vert_p^{\delta d^{c_{18}-1}}>\vert t_0\vert_p^d p^{1/(p-1)}/\vert d\vert_p.$$
Therefore $(f^{c_{18}}_{t_0}(a),t_0)$ and
$(f^{c_{18}}_{t_0}(b),t_0)$
belong to $\cD'$.
Let $\tilde{\cB}$ be the function in Theorem~\ref{thm:Bottcher}
and define $\tilde{\cB}'(z,t)=\tilde{\cB}(z,A_1(t),\ldots,A_d(t))$
which is well-defined on $\cD'$ thanks to the definition of $\cD'$
and Theorem~\ref{thm:Bottcher} (regardless of whether $p\mid d$
or not). From $f^N_{t_0}(a)=f^N_{t_0}(b)$
and the functional equation of $\cB$ in Theorem~\ref{thm:Bottcher},
we have:
$$\tilde{\cB}'(f^{c_{18}}_{t_0}(a),t_0)^{d^{N-c_{18}}}=
\tilde{\cB}'(f^{c_{18}}_{t_0}(b),t_0)^{d^{N-c_{18}}}.$$

In other words, we have
$\zeta:=\displaystyle\frac{\tilde{\cB}'(f^{c_{18}}_{t_0}(a),t_0)}{\tilde{\cB}'(f^{c_{18}}_{t_0}(b),t_0)}$
is a $d^{N-c_{18}}$-th root of unity.
On the other hand, if the order of $\zeta$ divides 
$d^{N-c_{18}-1}$ then we have:
$$\tilde{\cB'}(f^{c_{18}}_{t_0}(a),t_0)^{d^{N-c_{18}-1}}=
\tilde{\cB'}(f^{c_{18}}_{t_0}(b),t_0)^{d^{N-c_{18}-1}}$$
which gives 
$$\tilde{\cB'}(f^{N-1}_{t_0}(a),t_0)=
\tilde{\cB'}(f^{N-1}_{t_0}(b),t_0)$$
thanks to the functional equation satisfied by $\tilde{\cB}$.
By Theorem~\ref{thm:Bottcher}, we have $f^{N-1}_{t_0}(a)=f^{N-1}_{t_0}(b)$ contradicting the minimality of $N$. 

Write $\kappa=K_p(t_0)$ where $K_p\subset \Qbar_p$ is the completion of $K$ under $\vert\cdot\vert_p$. We have proved that the field $\kappa$ contains 
a $d^{N-c_{18}}$-th root of unity $\zeta$
and the order of $\zeta$ does not divide $d^{N-c_{18}-1}$.
This is the only place where we use the technical assumption that $d$ is a prime power; we conclude that the order of
$\zeta$ is a strict multiple of
$d^{N-c_{18}-1}$ and hence, see \cite[pp.~158--159]{Neu99}, we have:
$$[K_p(\zeta):K_p]\geq c_{20}d^{N-c_{18}}$$
for some constant $c_{20}$ that depends only on $K_p$ and 
$d$. Let $c_{19}=c_{20}d^{-c_{18}}$, we have:
$$[K(t_0):K]\geq [\kappa:K_p]\geq [K_p(\zeta):K_p]\geq c_{19}d^N$$
and this proves the claim. Then Corollary~\ref{cor:height of polynomials} (along with Corollary~\ref{cor:roots}) allows us to conclude the proof of Theorem~\ref{thm:general}.
\end{proof}

We have the following immediate corollary, which is itself a generalization of Theorem~\ref{thm:z^2+t}.  
\begin{cor}\label{cor:general}
Let $d$ be a prime power and let $f(z)=z^d+t\in\Qbar[t][z]$. 
Let $a,b\in \Qbar$ exactly one of which is an algebraic integer. 
Then the set 
$$S=\{t_0\in\Qbar:\ f^m_{t_0}(a)=f^n_{t_0}(b)\ \text{for some $m,n\in\N$}\}$$
has bounded height.
\end{cor}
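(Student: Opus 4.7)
The strategy is to invoke Theorem~\ref{thm:general} with $g=f$ and then rule out the first two alternatives of Conjecture~\ref{conj:diagonal}, leaving bounded height as the only possibility.

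Without loss of generality assume $a$ is the algebraic integer and $b$ is not. Since $b$ is not an algebraic integer, some coefficient of its minimal polynomial $P(X)\in\Q[X]$ over $\Q$ lies outside $\Z$; hence there is a rational prime $p$ for which some coefficient of $P$ has negative $p$-adic valuation. Because those coefficients are, up to sign, the elementary symmetric functions of the roots of $P$ in $\overline{\Q_p}$, not every such root can lie in $\Zbar_p$. Fix an embedding $\Qbar\hookrightarrow\C_p$ sending $b$ to such a root, so that $|b|_p>1$; under the same embedding, $|a|_p\le 1$ since $a$ is integral over $\Z$.

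I now check the hypotheses of Theorem~\ref{thm:general}. Condition~(A) is given. For~(B)(i), the only nonzero coefficient of $f(z)=z^d+t$ is $A_d(t)=t\in\Zbar_p[t]$ with $\deg A_d=1<d$. Condition~(B)(ii) is the choice of $p$ and embedding made above. For~(B)(iii) take $m=1$: $f(a)=a^d+t\in\Zbar_p[t]$ is non-constant of degree $1$ with leading coefficient $1$. Theorem~\ref{thm:general} therefore asserts that at least one of conditions~(1),~(2),~(3) of Conjecture~\ref{conj:diagonal} holds for the pairs $(f,a)$ and $(f,b)$.

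It remains to exclude~(1) and~(2), which is the only substantive point left. For~(2), if $f^m(a)=f^n(b)$ in $\Qbar[t]$, then specialising at $t=0$ yields $a^{d^m}=b^{d^n}$, an algebraic integer; but if $b^k$ satisfies a monic relation $Q(Y)=0$ with $Q\in\Z[Y]$, then $b$ itself satisfies the monic integer relation $Q(X^k)=0$, contradicting the non-integrality of $b$. For~(1), the iterates $f^n(a), f^n(b)\in\Qbar[t]$ have strictly increasing degree $d^{n-1}$ in $t$, so the function-field canonical heights satisfy $\hhat_f(a)=\hhat_f(b)=1/d>0$; in particular neither $a$ nor $b$ is $f$-preperiodic, and Proposition~\ref{prop:height function fields}(d) rules out the isotriviality of both pairs. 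Hence only alternative~(3) can occur, and $S$ has bounded height. No genuinely new obstacle appears; all the essential work was carried out in Theorem~\ref{thm:general}.
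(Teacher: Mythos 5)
Your proof is correct and takes essentially the same approach as the paper: choose a prime $p$ and an embedding of $\Qbar$ into $\C_p$ making $|b|_p>1$ and $|a|_p\le 1$, verify hypotheses (A) and (B) of Theorem~\ref{thm:general}, then rule out alternatives (1) and (2) of Conjecture~\ref{conj:diagonal} so that only bounded height remains. The paper excludes (2) by observing $f^m(a)\in\Zbar_p[t]$ while $f^n(b)\notin\Zbar_p[t]$; your specialization at $t=0$ isolates the same obstruction (the constant term $b^{d^n}$).
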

\begin{proof}
We can easily check that $(f,a)$ and $(f,b)$ are not isotrivial.
Without loss of generality, assume that $a$ is an algebraic integer while $b$ is not. There is a prime number $p$ such that, under a suitable embedding $\Qbar\rightarrow \Qbar_p$, 
$b$ is not integral over $\Z_p$. We have that $f^m(a)\in\Z_p[t]$
while $f^n(b)\notin \Z_p[t]$, hence $f^m(a)\neq f^n(b)$ for every
$m,n\in\N$. We apply Theorem~\ref{thm:general} and get
the bounded height result.
\end{proof}

It is an interesting problem to remove the technical condition that $d$ is a prime power in Theorem~\ref{thm:general}. 
This condition 
is only used at the end of the proof of Theorem~\ref{thm:general}
in order to show that the order of $\zeta$ is comparable
to $d^N$. 



\bigskip

\end{document}